\newtheorem{theorem}{Theorem}
\newtheorem{definition}[theorem]{Definition}
\newtheorem{lemma}[theorem]{Lemma}
\newtheorem{question}[theorem]{Question}
\newtheorem{proposition}[theorem]{Proposition}
\newenvironment{proof}[1][Proof]{\noindent\textbf{#1.} }{\ \rule{0.5em}{0.5em}}
\title{An analytical Lieb-Sokal lemma}
\begin{document}

\author{Mohan Ravichandran\footnote{mohan.ravichandran@msgsu.edu.tr, Supported by Tubitak 1001 grant number 115F204}}
\date{}
\maketitle

\begin{abstract}
A polynomial $p \in \mathbb{R}[z_1, \cdots, z_n]$ is called real stable if it is non-vanishing whenever all the variables take values in the upper half plane. A well known result of Elliott Lieb and Alan Sokal states that if $p$ and $q$ are $n$ variate real stable polynomials, then the polynomial $q(\partial)p := q(\partial_1, \cdots, \partial_n)p$, is real stable as well. In this paper, we prove analytical estimates on the locations on the zeroes of the real stable polynomial $q(\partial)p$ in the case when both $p$ and $q$ are multiaffine, an important special case, owing to connections to negative dependance in discrete probability. As an application, we prove a general estimate on the expected characteristic polynomials upon sampling from Strongly Rayleigh distributions. We then use this to deduce results concerning two classes of polynomials, mixed characteristic polynomials and mixed determinantal polynomials, that are related to the Kadison-Singer problem.  
\end{abstract}

\section{Introduction}

The study of real rooted univariate polynomials and linear transformations that preserve real rootedness has a long history going back to Schur and Polya and with important contributions by Sz. Nagy and Szego, among others \cite{RS02}[Ch. 5]. Given a univariate real rooted polynomial $p$, it is easy to see that the polynomial $(D-\alpha)p$, where $D$ denotes the derivative operator, is also real rooted. Iterating this, we see that if $q$ is another real rooted polynomial, then the polynomial $q(D)p$ is also real rooted\footnote{We will call constant polynomials real rooted, to keep the statements in this paper concise}.

Borcea and Branden in a celebrated series of papers \cite{BBLY1, BBLY2, BBPS, BBJ} developed a powerful theory of \emph{real stability}, a multivariate generalization of real rootedness. A polynomial $p(z_1, \cdots, z_n) \in \mathbb{R}[z_1, \cdots, z_n]$ is called \emph{real stable} if, 
\[p(z_1 ,\cdots, z_n) \neq 0, \quad \text{ if } \quad \operatorname{Im}(z_i) > 0, \,\,\forall i \in [n].\]

Borcea and Branden proved a variety of results concerning linear operators that preserve real stability; The result that will concern us here is an earlier result due to Lieb and Sokal \cite{LiSo}\footnote{This is a special case of a more general result due to Lieb and Sokal, which says that replacing a variable with a partial derivative with respect to a second variable preserves real stability.}, that is a direct analogue of the result mentioned earlier concerning univariate real stability preservers. Given real stable $q(z_1, \cdots, z_n)$ and $p(z_1, \cdots, z_n)$, then the polynomial, 
\[r(z_1, \cdots, z_n) := [q(\partial_1, \cdots, \partial_n)p](z_1, \cdots, z_n),\]
is also real stable. 

This is a \emph{qualitative result}; A celebrated recent application of real stability, the solution of the Kadison-Singer problem due to Marcus, Spielman and Srivastava \cite{MSS2}, required a \emph{quantitative} analogue of this result for an important class of real stable polynomials called \emph{mixed characteristic polynomials}. Given PSD matrices $A_1, \cdots, A_m$, Marcus, Spielman and Srivatava consider the following polynomial, 
\[\mu[A_1, \cdots, A_m] (x):= \left(\prod_{i = 1}^{m} (1-\partial_i)\right) \operatorname{det}[xI + z_1 A_1 + \cdots + z_m A_m]\mid_{z_1 = \cdots = z_m = 0}.\]

Results of Borcea and Branden \cite{BBLY2}, show that this polynomial is real rooted. But the crux of MSS' solution \cite{MSS2} to the Kadison-Singer problem lies in proving estimates on where the roots of the polynomial lie. MSS showed, using their \emph{multivariate barrier method} that if $\operatorname{Trace}(A_k) \leq \epsilon$ for $1 \leq k \leq m$ and if $A_1 + \cdots + A_m = I_n$, then the largest root of $\mu[A_1, \cdots, A_m]$ is at most $(1+\sqrt{\epsilon})^2$. %MSS deduced this from the following result concerning multivariate real stable polynomials. Under the same hypotheses, the polynomial, 
%\[ \left(\prod_{i = 1}^{m} (1-\partial_i)\right) \operatorname{det}[xI + z_1 A_1 + \cdots + z_m A_m],\]
%is non-zero whenever the $\operatorname{z_k}  > 0$ and $\operatorname 

MSS \cite{MSS2}, deduce this from a more general result of theirs'.  Under the same hypotheses, the following polynomial which is real stable, 
\[ \left(\prod_{i = 1}^{m} (1-\partial_i)\right) \operatorname{det}[xI + z_1 A_1 + \cdots + z_m A_m],\]
is non-zero whenever all the $z_k$ are real and $z_k  >  (1+\sqrt{\epsilon})^2$ for all $k \in [n]$. 

Another class of polynomials related to the Kadison-Singer problem are the \emph{mixed determinantal polynomials} introduced in \cite{MRKSP}. We discuss here, a special case. Given a hermitian matrix $A \in M_n(\mathbb{C})$, consider the polynomial, 
\[\chi_2[A](x) := \partial^{[n]}\operatorname{det}[Z-A]^2\mid_{z_1 = \cdots = z_n = x} := \dfrac{\partial^{n}}{\partial z_1\, \cdots \, \partial z_n}\operatorname{det}[Z-A]^2\mid_{z_1 = \cdots = z_n = x}.\]

In \cite{MRKSP}, the author gave a direct proof of Anderson's paving conjecture, another statement that implies the Kadison-Singer problem by proving that if $A$ is a hermitian contraction and the diagonal entries of $A$ are all at most $\alpha < -1/2$, then the largest root of $\chi_2[A]$ is strictly less than $1$. Akin to the work of MSS, this was deduced from a result that states that under the same hypotheses, the polynomial $\partial^{[n]}\operatorname{det}[Z-A]^2$ is non-vanishing whenever all the $z_i$ are real and at most $\dfrac{\sqrt{3(1-\alpha^2)}-\alpha}{2}$.

 These two results lead to the following natural question, 
\begin{question}
Given real stable polynomials, $q(z_1, \cdots, z_n)$ and $p(z_1, \cdots, z_n)$, get estimates on the locations of zero free regions in $\mathbb{R}^n$ of the real stable polynomial, 
\[r(z_1, \cdots, z_n) := [q(\partial_1, \cdots, \partial_n)p](z_1, \cdots, z_n).\]
\end{question}

We take the liberty of calling such results \emph{Analytical Lieb-Sokal} lemmas. In this paper, we prove the first such general result, in the case when both $p$ and $q$ are, additionally, \emph{multiaffine}. This might seem overly restrictive, but we now give three reasons to point out why this is nevertheless, interesting. 

Firstly, Borcea and Branden's celebrated results on real stability employ a reduction to the multiaffine case, through the operation they call \emph{polarization}. Given a real stable polynomial $p(z_1, \cdots, z_n)$ of degree $k_i$ in the variable $z_i$, the multaffine polynomial gotten by introducing new variables $\{z_{i,j}, \, j \in [k_i], \in i \in [n]\}$ and replacing each occurrence of $z_{i}^k$ by $\dfrac{e_{k}(z_{i,1}, \cdots, z_{i,k_i})}{\binom{k_i}{k}}$, is called the polarization of $p$. Borcea and Branden \cite{BBLY1}, prove, using the Grace-Walsh-Szego theorem, that the polarization operation preserves real stability. Polarization also allows us to transfer several questions about general real stable polynomials to questions about multiaffine real stable polynomials. 

Secondly, one of the most exciting applications of real stable polynomials has been to discrete probability. Borcea, Branden and Liggett \cite{BBL09}, used real stability to introduce the class of \emph{Strongly Rayleigh} measures, a large class of measures that enjoy excellent negative dependance properties. A probability measure $\mu$ on $2^{[n]}$ is said to be Strongly Rayleigh if the associated generating polynomial (which is multiaffine), 
\[P_{\mu}(z_1, \cdots, z_n) := \sum_{S \subset [n]} \mu(S)z^{S},\]
is real stable.  Product measures are Strongly Rayleigh, as are the important class of determinantal measures. Strongly Rayleigh measures are invariant under a number of operations like conditioning and the addition of external fields and this marks them out from other notions of negative correlation. They have been shown recently to satisfy a natural central limit theorem \cite{GLP16}, that further indicates both their tractability and their relevance. 

Thirdly, both classes of polynomials of direct relevance to the Kadison-Singer problem, namely mixed characteristic polynomials and mixed determinantal polynomials, have alternate expressions as a polynomial of the form $q(\partial) p$ where $q$ and $p$ are multiaffine. Our analytical Lieb-Sokal lemma will also allow us to prove asymptotic results about the roots of mixed determinantal polynomials, something that the results in \cite{MRKSP} did not yield. 

The paper is organised as follows: In the next section, we give definitions as well as an extended motivation for the main result. Section $3$ consists of combinatorial lemmas that will be needed to prove the main result. The proof of the analytical Lieb-Sokal lemma is given in section $4$. In section $5$, we apply this lemma to four classes of polynomials to showcase its utility. We end with a final section on possible extensions and other applications. 

\section{Preliminaries}

Given a univariate real rooted polynomial $p$, it is easy to check that the polynomial $(D - \alpha)p$ is also real rooted.\footnote{Here, $D$ denotes the derivative.} Further, it is elementary to see that, 
\[\lambda_{max} [(D-\alpha)p] \geq \lambda_{max} [p], \,\,\text{ if } \, \alpha > 0, \quad \lambda_{max} [(D-\alpha)p] \leq \lambda_{max}[p]\,\,\text{ if } \,\,\alpha \leq 0.\]

The maximum root of a polynomial is highly sensitive to differential transformations and there is little one can say beyond the above concerning how much the maximum root shifts. However, a remarkable idea due to Marcus, Spielman and Srivastava \cite{MSSICM}, allows one to get useful estimates on  root locations upon iteratively applying linear differential operators. Given a real rooted polynomial $p$, MSS consider the following quantity, 
\[\operatorname{smax}_{\varphi}(p) := \operatorname{max\,root}[Dp-\varphi\, p],\]
where $\varphi \geq 0$. One checks that $\operatorname{smax}_{\varphi}(p) \geq \lambda_{max}(p)$ with equality exactly when $\varphi = \infty$ and also, that  $\operatorname{smax}_{0}(p) = \infty$. Significantly, $\operatorname{smax}_{\varphi}$ for bounded values of $\varphi$, behaves well upon taking differential operators. One has the following key result, due to Adam Marcus,
\begin{proposition}[Marcus, 2014]
Let $p$ be a real rooted polynomial and let $\varphi \geq 0$. Then, for any $\alpha \leq 0$, we have, 
\[\operatorname{smax}_{\varphi}[(D-\alpha)p] \leq \operatorname{smax}_{\varphi}[p] - \dfrac{1}{\varphi - \alpha}.  \]
%Also, for any $\alpha > 0$ and any $\varphi \leq \alpha$, we have,
%\[\operatorname{smax}_{\varphi}[(D-\alpha)p] \leq \operatorname{smax}_{\varphi}[p] - \dfrac{1}{\varphi - \alpha}.  \]
\end{proposition}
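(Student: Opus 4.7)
Write $\Phi_p(x) := p'(x)/p(x)$ for the logarithmic derivative of $p$; on $(\lambda_{\max}(p),\infty)$ it is positive, strictly decreasing, and tends to $0$ at infinity. In particular, $s := \operatorname{smax}_{\varphi}(p)$ is the unique point of this interval at which $\Phi_p(s) = \varphi$, whence $s - \lambda_{\max}(p) \geq 1/\varphi$. Since $\varphi \geq 0 \geq \alpha$, the shift $\delta := 1/(\varphi - \alpha)$ satisfies $0 < \delta \leq 1/\varphi$, so $s - \delta \geq \lambda_{\max}(p) \geq \lambda_{\max}((D-\alpha)p)$, the last inequality coming from the elementary observation recalled in the excerpt. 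Because $\Phi_{(D-\alpha)p}$ is itself strictly decreasing above $\lambda_{\max}((D-\alpha)p)$, the whole proposition reduces to the single analytic inequality $\Phi_{(D-\alpha)p}(s - \delta) \leq \varphi$.

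A direct computation from $p''/p = \Phi_p^2 + \Phi_p'$ yields the key identity
\[\Phi_{(D-\alpha)p}(x) \;=\; \Phi_p(x) \,+\, \frac{\Phi_p'(x)}{\Phi_p(x) - \alpha}.\]
I would then introduce the auxiliary function $V(x) := 1/(\Phi_p(x) - \alpha)$, which is positive and increasing on $(\lambda_{\max}(p), \infty)$ with $V(s) = \delta$. Substituting $\Phi_p = 1/V + \alpha$ and $\Phi_p' = -V'/V^2$ turns the identity into $\Phi_{(D-\alpha)p}(x) = \alpha + (1 - V'(x))/V(x)$, and an elementary rearrangement (using $\varphi - \alpha = 1/\delta$ and $V(s) = \delta$) shows that $\Phi_{(D-\alpha)p}(s - \delta) \leq \varphi$ is equivalent to
\[V(s) - V(s - \delta) \;\leq\; \delta\,V'(s - \delta).\]

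This last inequality follows immediately from concavity of $V$ on $(\lambda_{\max}(p), \infty)$: if $V'$ is non-increasing, then $V(s) - V(s-\delta) = \int_{s-\delta}^{s} V'(t)\,dt \leq \delta\,V'(s - \delta)$. Direct differentiation reduces concavity of $V$ to the pointwise bound $2(\Phi_p')^2 \leq \Phi_p''(\Phi_p - \alpha)$. Writing $\Phi_p(x) = \sum_i k_i/(x - \lambda_i)$ over the distinct roots of $p$ with multiplicities $k_i$, one has $-\Phi_p'(x) = \sum_i k_i/(x - \lambda_i)^2$ and $\Phi_p''(x) = 2\sum_i k_i/(x - \lambda_i)^3$, and Cauchy--Schwarz applied to the vectors $(\sqrt{k_i}(x - \lambda_i)^{-1/2})_i$ and $(\sqrt{k_i}(x - \lambda_i)^{-3/2})_i$ gives $2(\Phi_p'(x))^2 \leq \Phi_p(x)\,\Phi_p''(x)$. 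Since $\Phi_p(x) - \alpha \geq \Phi_p(x) > 0$ --- which is exactly where the sign hypothesis $\alpha \leq 0$ enters --- one concludes $\Phi_p''(\Phi_p - \alpha) \geq \Phi_p\,\Phi_p'' \geq 2(\Phi_p')^2$, yielding concavity of $V$. I expect the concavity step to be the main technical obstacle; the remainder is routine bookkeeping with the two identities and the monotone-integration argument.
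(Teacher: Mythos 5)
The paper states this proposition as a citation to Marcus (2014) and does not supply a proof of its own, so there is no internal argument to compare against. On its merits, your proof is correct. I verified the key identity $\Phi_{(D-\alpha)p} = \Phi_p + \Phi_p'/(\Phi_p-\alpha)$, the rearrangement under the substitution $V = 1/(\Phi_p - \alpha)$ showing that the desired inequality is precisely $V(s)-V(s-\delta) \leq \delta V'(s-\delta)$, and the Cauchy--Schwarz estimate $2(\Phi_p')^2 \leq \Phi_p\Phi_p''$ which, combined with $\Phi_p - \alpha \geq \Phi_p$ (using $\alpha \leq 0$), gives $V'' \leq 0$. Note that convexity of $\Phi_p$ alone would not close the argument here --- the naive estimate $\Phi_p(s-\delta)-\Phi_p(s) \leq -\delta\Phi_p'(s-\delta)$ leads to the wrong-direction requirement $\delta(\Phi_p(s-\delta)-\alpha) \leq 1$ --- so your reformulation via the concavity of $V = 1/(\Phi_p - \alpha)$ is genuinely the right move and matches the structure of the barrier-function arguments in the MSS/Marcus literature. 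One small remark: the sign hypothesis $\alpha \leq 0$ is actually used in several places (positivity of $V$, the bound $\delta \leq 1/\varphi$, the root bound $\lambda_{\max}((D-\alpha)p) \leq \lambda_{\max}(p)$, and the final comparison $\Phi_p''(\Phi_p-\alpha) \geq \Phi_p''\Phi_p$), not only in the last step, but that does not affect correctness.
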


Iterating this, we have, 
\begin{proposition}
Let $p$ be a real rooted polynomial and let $q$ be a real rooted polynomial with all roots non-positive and let $\varphi \geq 0$. Then, 
\[\operatorname{smax}_{\varphi}[q(D)p] \leq \operatorname{smax}_{\varphi}[p] - \sum_{\alpha \in \sigma(q)}\dfrac{1}{\varphi - \alpha}.  \]
%Also, for any $\alpha > 0$ and any $\varphi \leq \alpha$, we have,
%\[\operatorname{smax}_{\varphi}[(D-\alpha)p] \leq \operatorname{smax}_{\varphi}[p] - \dfrac{1}{\varphi - \alpha}.  \]
\end{proposition}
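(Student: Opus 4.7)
The plan is to factor $q(D)$ into linear differential factors and apply Proposition 1 iteratively. Since $q$ is real rooted with all roots $\alpha_1,\dots,\alpha_k$ non-positive, I would write $q(x) = c \prod_{j=1}^{k}(x-\alpha_j)$ for some nonzero constant $c$, so that as differential operators $q(D) = c \prod_{j=1}^{k}(D-\alpha_j)$.

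Next, I would define the sequence $p_0 := p$ and $p_j := (D-\alpha_j)p_{j-1}$ for $1 \leq j \leq k$. Each intermediate $p_j$ is real rooted, because the operator $D-\alpha$ preserves real rootedness for any real $\alpha$ (as recalled at the start of the introduction). This is the key enabling fact, since Proposition 1 can only be invoked on a real rooted polynomial. Moreover, each $\alpha_j \leq 0$, so the hypothesis $\alpha \leq 0$ in Proposition 1 is satisfied at every step.

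Applying Proposition 1 at the $j$-th step gives
\[\operatorname{smax}_{\varphi}[p_j] \;\leq\; \operatorname{smax}_{\varphi}[p_{j-1}] - \frac{1}{\varphi-\alpha_j}.\]
Telescoping these $k$ inequalities yields
\[\operatorname{smax}_{\varphi}[p_k] \;\leq\; \operatorname{smax}_{\varphi}[p] - \sum_{j=1}^{k}\frac{1}{\varphi-\alpha_j} \;=\; \operatorname{smax}_{\varphi}[p] - \sum_{\alpha \in \sigma(q)}\frac{1}{\varphi-\alpha}.\]
Finally, $q(D)p = c\,p_k$, and since the roots of $D(cp_k)-\varphi(cp_k) = c(Dp_k - \varphi p_k)$ coincide with the roots of $Dp_k-\varphi p_k$, the quantity $\operatorname{smax}_{\varphi}$ is invariant under nonzero scalar multiplication, giving the claimed bound.

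There is no serious obstacle here beyond a bookkeeping concern: one must check that the order in which the linear factors $(D-\alpha_j)$ are peeled off does not matter (it does not, since differential operators with constant coefficients commute), and that the edge cases $\varphi = 0$ or $\alpha_j = 0$ are either excluded or interpreted appropriately (since $\varphi \geq 0$ and $\alpha_j \leq 0$, all the summands $1/(\varphi-\alpha_j)$ are nonnegative, and any $0/0$ terms can be handled as limits via the convention $\operatorname{smax}_0(p) = \infty$ mentioned in the text).
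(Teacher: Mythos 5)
Your proof is correct and follows exactly the route the paper intends: the paper derives this proposition from Proposition 1 simply by the word ``Iterating,'' which is precisely your factor-by-factor telescoping argument. Your additional remarks on commutativity of the linear factors, invariance of $\operatorname{smax}_{\varphi}$ under scalar multiplication, and preservation of real rootedness at each intermediate step are the right bookkeeping details to make the iteration rigorous.
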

%We can combine the two statements above as follows.
%\begin{proposition}
%Let $p$ be a real rooted polynomial and let $q(z) = z-\alpha$. Then
%\end{proposition}
This proposition can often be used to get useful estimates by optimizing over $\varphi$. When $q = z^k$, one derives asymptotically tight estimates in the restricted invertibility problem \cite{MRI}. Our goal in this paper is a multivariate analogue of the above result.

Recall that a polynomial $p(z_1, \cdots, z_n) \in \mathbb{C}[z_1, \cdots, z_n]$ is called \emph{stable} if,
\[p(z_1, \cdots, z_n) \neq 0, \quad \text{ whenever } \quad Im(z_i) > 0, \, \, \forall i \in [n].\]

If the coefficients are all, additionally, real, then it is called \emph{real stable}. It is elementary to check that univariate real rooted polynomials with real coefficients are real stable and conversely, that univariate real stable polynomials are real rooted. Several natural operations preserve real stabililty, some of them, being, 
\begin{enumerate}
\item Partial differentiation: If $p(z_1, \cdots, z_n)$ is real stable, then so is $\partial_1 p$. 
\item Specialization: $p(a, z_2, \cdots, z_n)$ is real stable for any $a \in \mathbb{R}$.
\item Pair Interactions: $p(\partial_1, z_2, \cdots, z_n)$ is real stable. By this we mean the following: Suppose 
\[p = \sum_{k \geq 1, S\subset [2, \cdots, n]} a_{k, S}\,z_1^k z^{S},\]
is real stable, then, so is, 
\[p = \sum_{k \geq 1, S\subset [2, \cdots, n]}
 a_{k, S} \,\partial_2^k\, (z^{S}).\] This is the well known Lieb-Sokal lemma \cite{LiSo}[Prop. 2.2].
\end{enumerate}

Simply iterating these three operations allows us to generate a large class of real stable polynomials. 

In \cite{MSSPC}, Marcus, Spielman and Srivastava studied the following operation on real rooted polynomials, which they call the \emph{symmetric additive convolution}. Given real rooted monic polynomials $p$ and $q$ of degree $d$, define, 
\[(p \boxplus_d q)(x+y)  := \dfrac{1}{d!}\sum_{k = 0}^{d} p^{(k)}(x)q^{(d-k)}(y)\tag{Free additive convolution}.\]
MSS showed that this operation is well defined and preserves real-rootedness and further, proved tight estimates on the location of its' roots, an estimate that was used by them in their proof of the existence of bipartite regular Ramanujan multigraphs of every degree and number of vertices.

\begin{theorem}[MSS, 2015]\label{roots}
	Let $p$ be monic, real rooted and degree $d$. Then, 
	\[\operatorname{smax}_{\varphi} (p \boxplus_d q) \leq \operatorname{smax}_{\varphi} (p) + \operatorname{smax}_{\varphi}(q) - \dfrac{d}{\varphi}.\]
	\end{theorem}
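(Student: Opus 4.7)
The plan is to reduce the claim to a bivariate real stability argument and then deploy the univariate Cauchy--Schwarz monotonicity of the barrier function $\Phi_f := f'/f$ alongside Maclaurin-type symmetric function inequalities. Write $u := \operatorname{smax}_\varphi(p)$, $v := \operatorname{smax}_\varphi(q)$, $r := p \boxplus_d q$, and $z^\star := u+v-d/\varphi$. Since $\Phi_r$ is strictly decreasing on $(\lambda_{\max}(r),\infty)$, the conclusion $\operatorname{smax}_\varphi(r) \le z^\star$ is equivalent to the single-point estimate $\Phi_r(z^\star) \le \varphi$, once one checks that $z^\star$ itself lies above $\lambda_{\max}(r)$.

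The reformulation step introduces the bivariate polynomial
\[F(x,y) := \sum_{k=0}^{d} p^{(k)}(x)\, q^{(d-k)}(y) = d!\,r(x+y),\]
which is real stable because $r$ is real rooted (the earlier MSS theorem). The point of this lift is that, although $F$ factors through $x+y$, it admits two independent partial derivatives, so $\Phi_r(z^\star)$ can be evaluated as $\partial_x F(x^\star,y^\star)/F(x^\star,y^\star)$ at any splitting $x^\star+y^\star=z^\star$ with $x^\star > \lambda_{\max}(p)$ and $y^\star > \lambda_{\max}(q)$. This degree of freedom is exactly what is missing from the direct univariate analysis.

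The key calculation expands the bivariate barrier: using the identity $p^{(k)}(x)/p(x) = k!\, e_k(u_1,\dots,u_d)$ with $u_i := 1/(x-\alpha_i)$ and similarly $v_j := 1/(y-\beta_j)$, one obtains
\[\Phi_r(x+y) \;=\; \frac{\sum_j j!\,(d-j+1)!\,e_j(u)\,e_{d-j+1}(v)}{\sum_k k!\,(d-k)!\,e_k(u)\,e_{d-k}(v)}.\]
The constraint $e_1(u)=\Phi_p(x)$ together with the Cauchy--Schwarz monotonicity $\partial_x(1/\Phi_p)\ge 1/d$ (the one-variable engine behind the Marcus proposition recalled above) controls how $\Phi_p(x)$ shifts as we move $x$ off the point $u$; Maclaurin's inequality $e_k \le \binom{d}{k}(e_1/d)^k$ then governs the higher-order symmetric functions. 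Taking the asymmetric splitting $x^\star = u - s$, $y^\star = v - (d/\varphi - s)$ and optimizing over $s\in(0,d/\varphi)$ via a saddle-point argument should reduce the desired bound $\Phi_r(z^\star)\le \varphi$ to a single rational-function inequality in $e_k(u), e_{d-j+1}(v)$.

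The main obstacle is the combinatorial balancing act. The symmetric-function inequality must be tight in two extreme regimes simultaneously: $d=1$, where it collapses to an algebraic identity with zero slack, and $p=q=(x-c)^d$, where Maclaurin is an equality; these two tight cases together pin the constant $d/\varphi$ in the theorem. Interpolating between them, while guaranteeing that the admissible window $s\in(0,d/\varphi)$ actually produces a splitting with $x^\star>\lambda_{\max}(p)$ and $y^\star>\lambda_{\max}(q)$ (since Cauchy--Schwarz only provides $u-\lambda_{\max}(p)\le d/\varphi$ from above and gives no matching lower bound), is where I expect the bulk of the work. If this direct route does not close uniformly, a fallback is to polarize $r$ into a multiaffine real stable polynomial in $d$ variables via Grace--Walsh--Szeg\H{o} and then apply Marcus's one-step proposition coordinate-wise, at the cost of a more involved telescoping.
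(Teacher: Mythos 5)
The paper does not actually prove Theorem~\ref{roots}; it is quoted from Marcus--Spielman--Srivastava and then reinterpreted via Lemma~\ref{convol} and the multiaffine convolution. The machinery the paper develops for its own estimates (Lemmas~\ref{shift} and~\ref{shift2}, culminating in Theorem~\ref{ALS}) is a polarization-plus-multivariate-barrier argument, which corresponds to the ``fallback'' you mention in your last sentence rather than to the direct bivariate splitting you propose as the primary route.

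Your direct route has a concrete gap. The plan is to evaluate $\Phi_r$ at $z^\star = u+v-d/\varphi$ by splitting $z^\star = x^\star + y^\star$ with $x^\star = u-s$, $y^\star = v - (d/\varphi - s)$, $s \in (0,d/\varphi)$, and then to expand $\Phi_r(x^\star+y^\star)$ in terms of $e_k\bigl(1/(x^\star-\alpha_i)\bigr)$ and $e_{d-k}\bigl(1/(y^\star-\beta_j)\bigr)$ and apply Maclaurin. For that expansion and for Maclaurin to make sense you need both $x^\star > \lambda_{\max}(p)$ and $y^\star > \lambda_{\max}(q)$, hence an $s$ with
\[
\frac{d}{\varphi} - \bigl(v-\lambda_{\max}(q)\bigr) \;<\; s \;<\; u - \lambda_{\max}(p).
\]
This window can be empty. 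For example, take $d=3$ and $p(x)=q(x)=x^2(x-M)$ with $M$ large. Then $\Phi_p(x) = \tfrac{2}{x} + \tfrac{1}{x-M}$, so solving $\Phi_p(u)=\varphi$ gives $u - M \to 1/\varphi$ as $M\to\infty$, and likewise $v - M \to 1/\varphi$. The lower constraint becomes $s > 3/\varphi - 1/\varphi = 2/\varphi$ while the upper becomes $s < 1/\varphi$, which is vacuous. So the asymmetric splitting you propose cannot be chosen admissibly in general, and the Maclaurin machinery you invoke has nothing to act on. A second, related gap: the reduction of $\operatorname{smax}_\varphi(r)\le z^\star$ to the single-point estimate $\Phi_r(z^\star)\le\varphi$ presupposes $z^\star > \lambda_{\max}(r)$, but that fact is not established independently (it is a consequence of the theorem, not an input to it); the barrier/monotonicity arguments in MSS and in this paper sidestep this by moving continuously from a manifestly-above-the-roots configuration, whereas a one-shot evaluation at $z^\star$ has no such safeguard. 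The fallback you name --- polarize $r$ into a $d$-variable multiaffine real stable polynomial and apply a one-step barrier shift coordinate-wise, in the spirit of Lemma~\ref{shift2} --- is indeed how this class of estimates is actually proved, and should be promoted from fallback to main line if you wish to complete the argument.
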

 
Adam Marcus, in a follow up paper \cite{MarCon}, studied this operation in further detail and made a compelling case for seeing this operation as a finite dimensional avatar of Voiculescu's \emph{free additive convolution} \cite{DNV}. We now show how this \emph{finite free additive convolution} has an alternate expression as the specialization of a multivariate polynomial, an expression that will naturally lead to generalizations. Let us make the following definition,
	\begin{definition}\label{def2}
	Let $p(z_1, \cdots, z_n)$ and $q(z_1, \cdots, z_n)$ be multiaffine real stable polynomials. Define,
	\[[p\ast q](z_1 + y_1, \cdots, z_n + y_n) := \left[\prod_{i = 1}^{n} \left(\dfrac{\partial}{\partial z_i} + \dfrac{\partial}{\partial y_i}\right)\right]p(z_1, \cdots, z_n)q(y_1, \cdots, y_n).\]
	\end{definition}
	It is easy to show that this is well defined. One also has, 
	\[[p\ast q](2z_1, \cdots, 2z_n) =  \left[\dfrac{\partial^{n}}{\partial z_1 \cdots \partial z_n} (pq) \right](z_1, \cdots, z_n).\]

	MSS' root estimate (\ref{roots}) can be neatly expressed using the convolution $p \ast q$ defined on multiaffine real stable polynomials. Let us use the notation $\operatorname{Pol}(p)$ to denote the polarization of a univariate polynomial $p$.\footnote{If $p = \sum_{k = 0}^{n} a_k z^{k}$, then $Pol(p) = \sum_{k = 0}^{n} a_k\dfrac{e_{k}(z_1, \cdots, z_n)}{\binom{n}{k}}$.} One has the following fact, 
	\begin{lemma}\label{convol}
		Let $p$ and $q$ be monic, real rooted and degree $d$. Then, 

	\[(p \boxplus_d q)(x) = [\operatorname{Pol}(p) \ast \operatorname{Pol}(q) ](x, \cdots, x).\]
	\end{lemma}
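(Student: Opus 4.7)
The plan is to expand both sides explicitly as combinations of derivatives of $p$ and $q$, and observe that they match term by term. Write $P := \operatorname{Pol}(p)$ and $Q := \operatorname{Pol}(q)$; these are symmetric multiaffine polynomials in $d$ variables satisfying $P(x,\ldots,x) = p(x)$ and $Q(y,\ldots,y) = q(y)$ by the defining property of polarization.

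First I would unpack Definition~\ref{def2}: expanding the product in the operator $\prod_{i=1}^d(\partial_{z_i} + \partial_{y_i})$ gives
\[
[P \ast Q](z+y) \;=\; \sum_{S \subseteq [d]} \bigl(\partial^S P\bigr)(z)\cdot \bigl(\partial^{[d]\setminus S} Q\bigr)(y),
\]
where $\partial^S := \prod_{i\in S}\partial_i$. To evaluate at the diagonal point $(x,\ldots,x)$, one can specialize $z = (x,\ldots,x)$ and $y = (0,\ldots,0)$ (any splitting with $z_i + y_i = x$ works by well-definedness of $\ast$).

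The key lemma to insert is a symmetry/polarization identity: since $P$ is symmetric and multiaffine, iteratively differentiating $p(x) = P(x,\ldots,x)$ gives
\[
p^{(k)}(x) \;=\; \frac{d!}{(d-k)!}\, \partial^S P(x,\ldots,x) \qquad \text{for every } |S|=k,
\]
and analogously $q^{(d-k)}(0) = \frac{d!}{k!}\,\partial^T Q(0,\ldots,0)$ for $|T| = d-k$. This follows from the chain rule together with the facts that $P$ is multiaffine (so $\partial_i^2 P = 0$ kills all coinciding indices) and symmetric (so each $\partial^S P$ with $|S|=k$ contributes the same value at the diagonal).

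Substituting these into the expansion and grouping the $2^d$ subsets by their cardinality $|S|=k$ (there are $\binom{d}{k}$ such), the binomials cancel against the factorials from the symmetry identity, leaving
\[
[P \ast Q](x,\ldots,x) \;=\; \sum_{k=0}^{d} \binom{d}{k}\cdot \frac{(d-k)!}{d!}\,p^{(k)}(x)\cdot \frac{k!}{d!}\,q^{(d-k)}(0) \;=\; \frac{1}{d!}\sum_{k=0}^{d} p^{(k)}(x)\,q^{(d-k)}(0),
\]
which is exactly $(p \boxplus_d q)(x+0) = (p \boxplus_d q)(x)$. There is no genuine obstacle here; the only point requiring care is the symmetry identity for $\partial^S P$ at the diagonal, and that is a direct consequence of the defining property of polarization.
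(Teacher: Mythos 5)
The paper states Lemma~\ref{convol} without proof, so there is no paper argument to compare against; I will assess your proof on its own. Your argument is correct. Expanding $\prod_{i=1}^d(\partial_{z_i}+\partial_{y_i})$ into $\sum_{S\subseteq[d]}\partial^S_z\,\partial^{S^c}_y$, specializing to $z=(x,\ldots,x)$, $y=0$, and invoking the identity $p^{(k)}(x)=\tfrac{d!}{(d-k)!}\,\partial^S\operatorname{Pol}(p)(x,\ldots,x)$ for $|S|=k$ (which holds because $\operatorname{Pol}(p)$ is symmetric and multiaffine, so repeated differentiation of the diagonal restriction never revisits an index) does produce $\tfrac{1}{d!}\sum_{k}p^{(k)}(x)q^{(d-k)}(0)=(p\boxplus_d q)(x)$ after the binomial/factorial cancellation, which you computed correctly.

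One small remark on logical dependencies: you invoke ``well-definedness of $\ast$'' to justify the splitting $z=(x,\ldots,x)$, $y=0$. At the point where Lemma~\ref{convol} appears, the paper has only asserted this; it is established later, in effect, through the equivalence of Definition~\ref{def2} with the diagonal formula $(P\ast Q)(2z)=\partial^{[d]}(PQ)(z)$ proved in Lemma~\ref{tl2}. If you wanted a self-contained version you could instead start from that diagonal formula: $(P\ast Q)(2x,\ldots,2x)=\partial^{[d]}(PQ)(x,\ldots,x)$, apply the Leibniz rule to get the same $\sum_S\partial^S P\,\partial^{S^c}Q$, and use the same polarization identity, landing on $(p\boxplus_d q)(2x)$ rather than $(p\boxplus_d q)(x)$ but with the same content. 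This sidesteps the well-definedness of $\ast$, though one still uses MSS's well-definedness of $\boxplus_d$ to interpret the output as a univariate polynomial. Your proof as written is sound.
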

	
	Theorem (\ref{roots}) using the notation of lemma (\ref{convol}) becomes,
		\begin{proposition}[Marcus, Spielman, Srivastava]\label{derr}
	Let $p(z_1, \cdots, z_n)$ and $q(z_1, \cdots, z_n)$ be multiaffine real stable polynomials and further assume that they are symmetric(invariant under permutations of the variables). Let $\textbf{a}$ and $\textbf{b}$ be above the roots of $p$ and $q$ respectively(in the notation of MSS). Assume that, 
	\[\dfrac{\partial_i p}{p}(\textbf{a}) \leq \varphi_1, \quad \dfrac{\partial_i q}{q}(\textbf{b}) \leq \varphi_2, \quad i \in [n].\]
Then, letting $\textbf{c} = \dfrac{\textbf{a} + \textbf{b}}{2} - \dfrac{2}{\varphi_1 + \varphi_2}\textbf{1}$, we have that $\textbf{c}$ is above the roots of $p \ast q$ and further, 
\[\dfrac{\partial_i (p\ast q)}{p \ast q}(\textbf{c}) \leq \dfrac{\varphi_1 + \varphi_2}{2}.\] 
	\end{proposition}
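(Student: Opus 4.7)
The plan is to derive this multivariate bound from the MSS barrier method, applied to the $2n$-variate polynomial $r(z,y):=p(z)\,q(y)$. Since $p$ and $q$ live in disjoint sets of variables, $r$ is real stable in $(z_1,\ldots,z_n,y_1,\ldots,y_n)$, and the hypotheses read off as $(\textbf{a},\textbf{b})$ lying above the roots of $r$ with $\Phi^r_{z_i}(\textbf{a},\textbf{b}) = \partial_i p(\textbf{a})/p(\textbf{a}) \leq \varphi_1$ and $\Phi^r_{y_i}(\textbf{a},\textbf{b}) = \partial_i q(\textbf{b})/q(\textbf{b}) \leq \varphi_2$ for every $i$. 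The operator of interest is $T:=\prod_{j=1}^{n}(\partial_{z_j}+\partial_{y_j})$, which preserves real stability because each factor is a positive directional derivative (introduce an auxiliary parameter and apply Lieb--Sokal). By Definition~\ref{def2}, $Tr = [p\ast q](z+y)$, so controlling the barriers of $Tr$ at a suitably shifted point gives the desired estimate for $p\ast q$.

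I would apply the $n$ factors of $T$ one at a time, shifting the evaluation point after each step. The key analytic ingredient is a single-step barrier update for $\partial_{z_j}+\partial_{y_j}$: if $f$ is real stable multiaffine, $(x,w)$ is above its roots, and $\psi_1:=\Phi^f_{z_j}(x,w)$, $\psi_2:=\Phi^f_{y_j}(x,w)$, then for $\delta=2/(\psi_1+\psi_2)$ the point $(x-\delta\,e_{z_j},\,w-\delta\,e_{y_j})$ lies above the roots of $(\partial_{z_j}+\partial_{y_j})f$, the new barrier in the $(z_j,y_j)$ pair is at most $(\psi_1+\psi_2)/2$, and barriers in the remaining $z_i,y_i$ directions for $i\neq j$ do not increase. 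One proves this by expressing the cross derivative $\partial_{z_j}\partial_{y_j} f/f$ in terms of $\psi_1,\psi_2$, using the convexity and monotonicity of $\Phi^f$ above the roots, and combining with an AM--GM step that replaces the two barriers by their average.

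Iterating this update for $j=1,\ldots,n$, the symmetry of $p$ and $q$ guarantees that after each step the barriers in the untouched directions are still bounded by $\varphi_1$ and $\varphi_2$ respectively, so the shift $\delta=2/(\varphi_1+\varphi_2)$ may be used at every step. After all $n$ applications, the resulting polynomial is $[p\ast q](z+y)$ evaluated with $(z,y)=(\textbf{a}-\delta\textbf{1},\textbf{b}-\delta\textbf{1})$, giving a point $\textbf{c}$ of the stated form above the roots of $p\ast q$ with barrier bounded by $(\varphi_1+\varphi_2)/2$. The main obstacle is the single-step barrier update for $\partial_{z_j}+\partial_{y_j}$: unlike MSS's update for $1-\partial_j$, where one pushes below a threshold of $1$, here two simultaneous barriers have to be controlled together while preventing any side barrier from growing, and it is precisely the multiaffine hypothesis together with joint convexity of $\Phi^f$ that makes this possible.
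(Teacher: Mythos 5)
There is a genuine gap. Your key single-step claim --- that for multiaffine real stable $f$ with $(x,w)$ above its roots and $\psi_1 = \Phi^f_{z_j}(x,w)$, $\psi_2 = \Phi^f_{y_j}(x,w)$, the shift $\delta = 2/(\psi_1+\psi_2)$ in \emph{each} of the $z_j$ and $y_j$ directions keeps $(x-\delta e_{z_j},\, w-\delta e_{y_j})$ above the roots of $(\partial_{z_j}+\partial_{y_j})f$ with new barrier at most $(\psi_1+\psi_2)/2$ --- is false. Take $n=1$, $f(z,y)=zy$, $(x,w)=(a,a)$: then $\psi_1=\psi_2=1/a$, $\delta=a$, and the shifted point is $(0,0)$, where $(\partial_z+\partial_y)f = z+y$ vanishes, so the new barrier is $+\infty$, not $\leq 1/a$. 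More generally, at $(a,b)$ the new barrier $(a+b)/(a-b)^2$ exceeds $(\psi_1+\psi_2)/2 = (a+b)/(2ab)$ whenever $(a-b)^2 < 2ab$. The correct single-step estimate (Lemma~\ref{shift} of the paper) permits only a \emph{total} shift $\delta_{z_j}+\delta_{y_j} \leq 1/(\psi_1+\psi_2)$ across the pair, a quarter of what you propose.

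Iterating the correct single-step lemma (Lemma~\ref{shift2}, Theorem~\ref{ALS}) gives the ``weaker'' proposition stated immediately after Proposition~\ref{derr}, with $1/(\varphi_1+\varphi_2)$ in place of $2/(\varphi_1+\varphi_2)$ --- not Proposition~\ref{derr} itself. The extra factor of two is where the symmetry hypothesis must do genuine work, and it does not follow from a coordinate-by-coordinate barrier argument using only the pointwise monotonicity and convexity of $\Phi^f$ in Lemma~\ref{convex}; MSS obtain it in \cite{MSSPC} via a finer analysis of $\boxplus_d$. Note also that the paper does not reprove Proposition~\ref{derr}: it states it as a translation of Theorem~\ref{roots} through Lemma~\ref{convol}, and only proves the weaker non-symmetric version. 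Finally, as a separate remark, the displayed $\textbf{c}$ in Proposition~\ref{derr} appears to carry a typo: translating Theorem~\ref{roots} gives $\textbf{c}=\textbf{a}+\textbf{b}-\frac{2}{\varphi_1+\varphi_2}\mathbf{1}$, not $\frac{\textbf{a}+\textbf{b}}{2}-\frac{2}{\varphi_1+\varphi_2}\mathbf{1}$; as written the statement already fails at $n=1$, $p=q=z$, $\textbf{a}=\textbf{b}$, where $\textbf{c}=0$ and the barrier of $p\ast q = z$ diverges.
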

	
	It is natural to ask if something similar holds when the assumption of symmetry is dropped. It does not, but we will prove a weaker result, 
	
	\begin{proposition}
	Let $p(z_1, \cdots, z_n)$ and $q(z_1, \cdots, z_n)$ be multiaffine real stable polynomials and let $\textbf{a}$ and $\textbf{b}$ be above the roots of $p$ and $q$ respectively. Assume that, 
	\[\dfrac{\partial_i p}{p}(\textbf{a}) \leq \varphi_1, \quad \dfrac{\partial_i q}{q}(\textbf{b}) \leq \varphi_2, \quad i \in [n].\]
Then, letting $\textbf{c} = \dfrac{\textbf{a} + \textbf{b}}{2} - \dfrac{1}{\varphi_1 + \varphi_2}\textbf{1}$, we have that $\textbf{c}$ is above the roots of $p \ast q$ and further, 
\[\dfrac{\partial_i (p\ast q)}{p \ast q}(\textbf{c}) \leq \dfrac{\varphi_1 + \varphi_2}{2}.\] 
	\end{proposition}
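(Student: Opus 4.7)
The plan is to interpret $p \ast q$ as the result of iteratively applying the operators $O_i := \partial/\partial z_i + \partial/\partial y_i$ to the $2n$-variate real stable multiaffine polynomial $P(z,y) := p(z)q(y)$, and to track multivariate barriers through this process. The polynomial $P$ is real stable and multiaffine, with $\frac{\partial_{z_i} P}{P}(\textbf{a}, \textbf{b}) = \frac{\partial_i p}{p}(\textbf{a}) \leq \varphi_1$ and $\frac{\partial_{y_i} P}{P}(\textbf{a}, \textbf{b}) \leq \varphi_2$. After all $n$ operators $O_i$ have been applied, the resulting polynomial depends on each pair $(z_i, y_i)$ only through $s_i := z_i + y_i$, and identifying $s_i$ with $x_i$ recovers $p \ast q$ by definition.

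The technical heart is a single-step barrier update: if $R$ is multiaffine real stable at a point $(\textbf{u}, \textbf{v})$ above the roots, with barrier bounds $\Phi^{z_i}_R(\textbf{u}, \textbf{v}) \leq \alpha$ and $\Phi^{y_i}_R(\textbf{u}, \textbf{v}) \leq \beta$, then $R' := O_i R$ satisfies $\Phi^{s_i}_{R'}(\textbf{u}, \textbf{v}) \leq \alpha\beta/(\alpha+\beta)$ at the same point, while the barriers in all other variables do not increase. To prove this, I expand $R = A + z_i B + y_i C + z_i y_i D$ in its $z_i, y_i$-multiaffine form and compute
\[R' = (B+C) + s_i D = R \cdot \left(\Phi^{z_i}_R + \Phi^{y_i}_R\right),\]
so that $\Phi^{s_i}_{R'} = (D/R)/(\Phi^{z_i}_R + \Phi^{y_i}_R)$; invoking the convexity estimate $D/R \leq \Phi^{z_i}_R \cdot \Phi^{y_i}_R$, equivalent to $\partial_{y_i}\Phi^{z_i}_R \leq 0$, a standard property of barriers of multiaffine real stable polynomials from the MSS Kadison--Singer analysis, yields the claim. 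For other variables, the identity
\[\Phi^{z_j}_{R'} = \Phi^{z_j}_R + \frac{\partial_{z_j}\left(\Phi^{z_i}_R + \Phi^{y_i}_R\right)}{\Phi^{z_i}_R + \Phi^{y_i}_R},\]
combined with the monotonicity of barriers in every argument, gives $\Phi^{z_j}_{R'} \leq \Phi^{z_j}_R$ (and similarly for $\Phi^{y_j}_{R'}$).

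Iterating this lemma for $i = 1, \ldots, n$ produces, at the image of $(\textbf{a}, \textbf{b})$, the polynomial $p \ast q$ with each coordinate barrier bounded by $\varphi_1\varphi_2/(\varphi_1+\varphi_2)$. To reach the stated point $\textbf{c} = \frac{\textbf{a}+\textbf{b}}{2} - \frac{1}{\varphi_1 + \varphi_2}\textbf{1}$, I then translate the sum coordinates downward by a coordinated amount; by the convexity and monotonicity of the multivariate barrier of $p \ast q$, this shift increases each sum-variable barrier but keeps it at most $(\varphi_1+\varphi_2)/2$. The incremental shifts introduced at each step of the iteration can be chosen so their total matches the $\frac{1}{\varphi_1+\varphi_2}\textbf{1}$ translation in the statement.

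The principal obstacle will be the simultaneous shift bookkeeping: a downward shift in one coordinate $s_i$ increases barriers in the remaining $s_j$ directions (barriers being decreasing functions of every variable), so the aggregate shifts across all coordinates must be balanced so that every $\Phi_i^{p \ast q}(\textbf{c})$ remains bounded by $(\varphi_1+\varphi_2)/2$. This is a multivariate analogue of the MSS barrier-update mechanism from the Kadison--Singer proof, and is presumably what the combinatorial lemmas announced in Section $3$ are designed to formalize.
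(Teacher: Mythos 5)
Your single-step computation is correct and matches the paper's setup: writing $P(z,y)=p(z)q(y)$, interpreting $p\ast q$ via the operators $O_i=\partial_{z_i}+\partial_{y_i}$ (the paper's Lemma~\ref{tl2}), expanding $R=A+z_iB+y_iC+z_iy_iD$, and showing $\Phi^{s_i}_{O_iR}\leq \alpha\beta/(\alpha+\beta)$ at the \emph{same} point via $D/R\leq\Phi^{z_i}_R\Phi^{y_i}_R$ (a consequence of the monotonicity $\partial_{y_i}\Phi^{z_i}_R\leq 0$ from Lemma~\ref{convex}). The claim that the other barriers do not increase is also right, by the identity $\Phi^{z_j}_{O_iR}=\Phi^{z_j}_R+\partial_{z_j}F/F$ with $F=\Phi^{z_i}_R+\Phi^{y_i}_R$.

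The gap you flag is a genuine one, and it is not merely bookkeeping. After iterating $O_1,\dots,O_n$ with \emph{no} shift, you have barriers at $(\textbf{a},\textbf{b})$ bounded by $\gamma=\varphi_1\varphi_2/(\varphi_1+\varphi_2)$, but you now want to translate down by $\frac{1}{\varphi_1+\varphi_2}\textbf{1}$. Knowing only that the coordinate barriers of $p\ast q$ are $\leq\gamma$ does not by itself let you shift down by $1/(\varphi_1+\varphi_2)$ and retain any bound: convexity and monotonicity of the barrier tell you the barrier \emph{increases} under a downward shift, but give you no upper bound on how much. Indeed, even for a single variable one can have $\Phi(a)=\gamma$ with the root arbitrarily close to $a-0$, so "shift by a fixed budget at the end" fails without additional structural information. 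What makes the paper's argument work is that the shift is performed \emph{interleaved} with each $\partial_{z_i}+\partial_{y_i}$: Lemma~\ref{shift} exploits the multiaffine identity $(\partial_i+\partial_j)p(\textbf{a}-\delta_ie_i-\delta_je_j)=(\partial_i+\partial_j)p(\textbf{a})-(\delta_i+\delta_j)\partial_i\partial_jp(\textbf{a})$ to show that the shift budget $\delta_i+\delta_j\leq p/(\partial_ip+\partial_jp)(\textbf{a})$ --- exactly the slack created by this one differentiation --- keeps \emph{every} directional barrier non-increasing. Iterating (Lemma~\ref{shift2}) then produces the shifted point and the bound $\leq\min(\varphi_1,\varphi_2)\leq(\varphi_1+\varphi_2)/2$ simultaneously. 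You would need to prove an analogue of Lemma~\ref{shift}, or give a different quantitative argument that aggregates the per-step slack into a final translation; as written, your proposal treats that as "presumably what the combinatorial lemmas announced in Section~3 are designed to formalize," which is exactly the missing content.

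One more observation: your within-step bound $\alpha\beta/(\alpha+\beta)$ is sharper than what the paper's Lemma~\ref{shift} delivers (it keeps the barrier at $\min(\alpha,\beta)$ after the shift). If you could push a shift lemma through with your sharper single-step bound, you might obtain a stronger conclusion than the stated one; but this is speculative without the shift mechanism. Also note that in the shift-during-each-step formulation the shifted point is naturally $\textbf{a}+\textbf{b}-\frac{1}{\varphi_1+\varphi_2}\textbf{1}$, not $\frac{\textbf{a}+\textbf{b}}{2}-\frac{1}{\varphi_1+\varphi_2}\textbf{1}$; the factor $\frac{1}{2}$ in the statement (also present in Proposition~\ref{derr}) does not match the scaling that comes out of Lemma~\ref{tl2} and Lemma~\ref{shift2}, so you should double-check that normalization before investing effort matching it.
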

	
	This is different from (\ref{derr}) because of the term $\dfrac{1}{\varphi_1 + \varphi_2}$ instead of $\dfrac{2}{\varphi_1 + \varphi_2}$, rendering the estimate a little weaker.

Going back to definition (\ref{def2}), let us recall that $p \ast q$ can be written as a differential operator with a multiaffine symbol applied to another multiaffine polynomial. One has, 

	\[[p\ast q](z_1 + y_1, \cdots, z_n + y_n) := \left[\prod_{i = 1}^{n} \left(\dfrac{\partial}{\partial z_i} + \dfrac{\partial}{\partial y_i}\right)\right]p(z_1, \cdots, z_n)q(y_1, \cdots, y_n).\]

We are now naturally led to ask a more general question.

\begin{question}\label{gen}
Let $p$ and $q$ be multiaffine real stable polynomials in $\mathbb{R}[z_1, \cdots, z_n]$. Derive estimates for the location of the zero free regions of the real stable polynomial, 
\[r := q(\partial)p := q(\partial_1 ,\cdots, \partial_n)p.\]
\end{question}

We will be able to give a usable answer to this question in section (4) of this paper. We will then indicate four applications to this result to combinatorial polynomials related to the Kadison-Singer problem.

\section{Convolutions of multiaffine real stable polynomials}

We start off with defining a natural convolution operation on multiaffine real stable polynmials.
\begin{definition}
Given two multiaffine real stable polynomials $p, q$ in $\mathbb{R}[z_1, \cdots, z_n]$ define their convolution $p \ast q$ as, 
\[(p\ast q)(2z_1, \cdots, 2z_n) := \left[\dfrac{\partial^n}{\partial z_1 \, \cdots \, z_n} (qp)\right](z_1, \cdots, z_n).\]
\end{definition}

Since the product of real stable polynomials is real stable and taking partial derivatives preserves real stability, it is immediate that $p \ast q$ is also real stable. It is also easy to see that $p \ast q$ is additionally, multiaffine. We now collect a few elementary facts about the above convolution. The first fact is an alternate expression that justifies our usage of the term convolution to describe it.
\begin{lemma}\label{tl1}
Let $p, q$ be multiaffine real stable polynomials in $\mathbb{R}[z_1, \cdots, z_n]$. Let us write them out as, 
\[p(z_1, \cdots, z_n) = \sum_{S \subset [n]} p_S z^{S}, \qquad q(z_1, \cdots, z_n) = \sum_{S \subset [n]} q_S z^{S}.\]
Then, we have that, 
\[(p \ast q)(z_1, \cdots, z_n) = \sum_{S \subset [n]}\left(\sum_{\substack{ R \cup T = [n],\\  R \cap T = S}} p_R\, q_T\right) z^{S}.\]
\end{lemma}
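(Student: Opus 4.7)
The statement is really an identity obtained by unpacking the definition, so the plan is a direct computation built around the multiaffine structure. The reason the convolution deserves its name is precisely that, after the derivatives and the factor $2^n$ have cancelled, what is left is the combinatorial sum on the right.

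First I would expand the product and use multiaffinity:
\[pq \;=\; \sum_{R,T\subseteq [n]} p_R\, q_T\, z^R z^T, \qquad z^R z^T \;=\; \Bigl(\prod_{i\in R\cap T} z_i^2\Bigr)\Bigl(\prod_{i\in R\triangle T} z_i\Bigr).\]
This is the only place multiaffinity enters, and it is what makes the exponents only ever $0$, $1$ or $2$.

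Next I would apply $\partial_1\partial_2\cdots\partial_n$ termwise. For a given pair $(R,T)$ the monomial has variable $z_i$ with exponent $2$ if $i\in R\cap T$, exponent $1$ if $i\in R\triangle T$, and exponent $0$ if $i\notin R\cup T$. Differentiating once in each variable therefore annihilates any term for which $R\cup T\neq[n]$, and on the surviving terms it produces
\[\partial_1\cdots\partial_n\bigl(z^R z^T\bigr) \;=\; 2^{|R\cap T|}\,z^{R\cap T} \qquad (\text{when }R\cup T=[n]).\]

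Finally I would group the surviving contributions by the set $S:=R\cap T$, so that
\[\Bigl[\partial_1\cdots\partial_n(pq)\Bigr](z_1,\dots,z_n) \;=\; \sum_{S\subseteq [n]} 2^{|S|}\!\!\!\sum_{\substack{R\cup T=[n]\\ R\cap T=S}}\!\!\! p_R\,q_T\;\; z^S.\]
Comparing with the defining identity $(p\ast q)(2z_1,\dots,2z_n)=\sum_S c_S (2z)^S = \sum_S 2^{|S|} c_S z^S$ and matching coefficients (the monomials $z^S$ are linearly independent) identifies $c_S$ with the inner sum, which is the claimed formula. No step here looks obstructive; the only minor bookkeeping point to be careful about is that the factor $2^{|S|}$ coming out of differentiating $z_i^2$ terms is exactly what is absorbed by the substitution $z_i\mapsto 2z_i$ in the definition of $p\ast q$, so the two powers of $2$ cancel and leave the clean combinatorial identity.
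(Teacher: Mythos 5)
Your proof is correct and takes essentially the same approach as the paper's: both expand $pq$, observe that $\partial_1\cdots\partial_n$ kills any monomial $z^Rz^T$ unless $R\cup T=[n]$ and otherwise yields $2^{|R\cap T|}z^{R\cap T}$, and then cancel the resulting $2^{|S|}$ against the dilation $z\mapsto 2z$ in the definition. The paper phrases this as a reduction to monomials $p=z^{S_1}$, $q=z^{S_2}$ by bilinearity, whereas you carry all terms through simultaneously and match coefficients at the end; that is a cosmetic difference only.
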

\begin{proof}
By linearity, it is enough to prove this when $p = z^{S_1}$ and $q = z^{S_2}$. We then have that, 
\[(p\ast q)(2z_1, \cdots, z_n) = \dfrac{\partial^n}{\partial z_1 \, \cdots \, z_n} z^{S_1}z^{S_2}.\]
This is zero unless $S_1 \cup S_2 = [n]$, in which case, we have that $z^{S_1}z^{S_2} = z^{[n]}z^{S_1 \cap S_2}$, yielding that, 
\[(p\ast q)(2z_1, \cdots, z_n) = \dfrac{\partial^n}{\partial z_1 \, \cdots \, z_n} z^{[n]}z^{S_1 \cap S_2} = 2^{|S_1 \cap S_2|}z^{S_1 \cap S_2}.\]
We conclude that if $S_1 \cup S_2 = [n]$, 
\[z^{S_1} \ast z^{S_2} = z^{S_1 \cap S_2}.\]
By linearity, this completes the proof. 
\end{proof}

The second elementary fact shows that this convolution can be written as a differential operator with multiaffine symbol applied to another multiaffine real stable polynomial.

\begin{lemma}\label{tl2}
Let $p$ and $q$ be real stable multiaffine polynomials in $\mathbb{R}[z_1, \cdots, z_n]$. Then, 
\[ (p \ast q)(z_1+y_1, \cdots, z_n+y_n) = \left[\prod_{i = 1}^{n} \left(\dfrac{\partial}{\partial z_i} + \dfrac{\partial}{\partial y_i}\right) \right]  p(z_1, \cdots, z_n) q(y_1, \cdots, y_n).\]
\end{lemma}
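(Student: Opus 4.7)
The plan is to reduce to monomials by bilinearity and then verify the identity by a direct factor-by-factor calculation, using Lemma \ref{tl1} to evaluate the left-hand side.

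First I would note that both sides of the claimed identity are bilinear in $p$ and $q$. Since $p$ and $q$ are multiaffine, it suffices to check the identity for monomials $p = z^A$ and $q = z^B$ with $A, B \subseteq [n]$, and then extend by linearity.

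For the right-hand side, the key observation is that $p(z_1,\dots,z_n)q(y_1,\dots,y_n) = z^A y^B$ is a product of $n$ factors, where the factor indexed by $i$ involves only the variables $z_i$ and $y_i$. Since each operator $\partial_{z_i} + \partial_{y_i}$ acts only on the $i$-th pair of variables, the operators in the product commute and act on disjoint factors. I would then enumerate four cases for each $i \in [n]$: if $i \in A \cap B$ the factor $z_i y_i$ becomes $z_i + y_i$; if $i \in A \setminus B$ the factor $z_i$ becomes $1$; if $i \in B \setminus A$ the factor $y_i$ becomes $1$; and if $i \notin A \cup B$ the factor $1$ becomes $0$. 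Hence, if $A \cup B \neq [n]$ the right-hand side is zero, and otherwise it equals $\prod_{i \in A \cap B}(z_i + y_i)$.

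For the left-hand side, Lemma \ref{tl1} gives $z^A \ast z^B = z^{A \cap B}$ when $A \cup B = [n]$ and $z^A \ast z^B = 0$ otherwise. Substituting $z_i \mapsto z_i + y_i$ yields $\prod_{i \in A \cap B}(z_i + y_i)$ in the first case and $0$ in the second. This matches the right-hand side in both cases, completing the verification for monomials and hence, by linearity, in general.

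I do not anticipate any real obstacle: the argument is a bookkeeping computation, and the only thing to be careful about is not confusing the roles of the $z$- and $y$-variables when applying the operators, and correctly identifying when a differentiation produces a $0$ versus a $1$ on a multiaffine monomial factor.
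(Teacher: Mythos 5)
Your proof is correct, and it takes a somewhat different (and cleaner) route than the paper's. The paper proves the identity directly from the definition $(p\ast q)(2z_1,\dots,2z_n)=\partial^n/\partial z_1\cdots\partial z_n\,(pq)$, showing by hand that both sides of the lemma evaluate to $\prod_{i\in S\cap T}(z_i+y_i)$ on monomials $p=z^S$, $q=z^T$; in particular it evaluates the right-hand side by expanding $\prod_i(\partial_{z_i}+\partial_{y_i})$ into the sum $\sum_{U\subseteq[n]}\partial_z^{U}\partial_y^{U^c}$ and then identifying which subsets contribute. You instead invoke Lemma~\ref{tl1} as a black box to evaluate the left-hand side (getting $z^{A\cap B}$ when $A\cup B=[n]$, else $0$), and evaluate the right-hand side by the observation that $z^Ay^B$ is a product of $n$ independent factors with the $i$-th operator $\partial_{z_i}+\partial_{y_i}$ acting only on the $i$-th pair of variables, so the whole operator factorizes and a four-case check per index suffices. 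This factor-by-factor argument is tidier than the subset-sum expansion and avoids the notational collision in the paper (where the summation index $S$ clashes with the exponent set $S$). Both are complete; the only thing you implicitly rely on is that $\ast$ distributes over linear combinations in each argument, which is immediate from the definition and worth a half-sentence if you want to be fully explicit.
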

\begin{proof}
We need to show that, 
\[\left[\prod_{i = 1}^{n} \left(\dfrac{\partial}{\partial z_i} + \dfrac{\partial}{\partial y_i}\right) \right]  p(z_1, \cdots, z_n) q(y_1, \cdots, y_n) = \left[\dfrac{\partial^n}{\partial z_1 \, \cdots \, z_n} pq\right]\left(\frac{z_1+y_1}{2}, \cdots, \frac{z_n+y_n}{2}\right).\]
By bilinearity, it is enough to show this for monomials. Let $p = z^{S}$ and $q = z^{T}$. Both expressions above are zero unless $T \cup S = [n]$. For the non-trivial case, let us write $S = A \cup C$ and $T = B \cup C$ where $A, B, C$ form a partition of $[n]$. For the right hand expression, we have, 
\[ \dfrac{\partial^n}{\partial z_1 \, \cdots \, z_n} z^{A}z^{B}z^{2C} = 2^{|C|}z^{C}.\] Evaluating this at the point $\left(\dfrac{z_1+y_1}{2}, \cdots, \dfrac{z_n+y_n}{2}\right)$, we get the expression,
\begin{eqnarray}\label{one}
 \left[\dfrac{\partial^n}{\partial z_1 \, \cdots \, z_n} \right] z^{A}z^{B}z^{2C} \left(\frac{z_1+y_1}{2}, \cdots, \frac{z_n+y_n}{2}\right) = \prod_{i \in S} (z_i + y_i).
 \end{eqnarray}

For the left hand expression, we have that, 
\[\left[\prod_{i = 1}^{n} \left(\dfrac{\partial}{\partial z_i} + \dfrac{\partial}{\partial y_i}\right) \right] = \sum_{S \subset [n]} \dfrac{\partial^{S}}{\partial z^{S}} \dfrac{\partial^{S^{c}}}{\partial y^{S^{c}}}.\]
We also note that, 
\[\dfrac{\partial^{S}}{\partial z^{S}} \dfrac{\partial^{S^{c}}}{\partial y^{S^{c}}} z^{A}z^{C}y^{B}y^{C},\]
is zero unless $S \subset A \cup B$ and $S^{c} \subset B \cup C$, which means we can write $S = A \cup D$, where $D \subset C$, in which case, we get, 
\[\dfrac{\partial^{S}}{\partial z^{S}} \dfrac{\partial^{S^{c}}}{\partial y^{S^{c}}} z^{A}z^{C}y^{B}y^{C} = z^{C \setminus D}y^{D}.\]
Summing this up, we have, 
\begin{eqnarray}\label{two}
\nonumber \left[\prod_{i = 1}^{n} \left(\dfrac{\partial}{\partial z_i} + \dfrac{\partial}{\partial y_i}\right) \right] z^{A \cup C}y^{B \cup C} &=& \left[\sum_{S \subset [n]} \dfrac{\partial^{S}}{\partial z^{S}} \dfrac{\partial^{S^{c}}}{\partial y^{S^{c}}}\right] z^{A \cup C}y^{B \cup C} \\
\nonumber &=& \sum_{D \subset C} z^{C \setminus D}y^{D}\\
&=& \prod_{i \in S} (z_i + y_i)
\end{eqnarray}
Comparing (\ref{one}) and (\ref{two}), we see that the lemma holds.
\end{proof}

We now relate our convolution of multiaffine polynomials to the result of applying a multiaffine differential operator to another multiaffine polynomial.
\begin{lemma}\label{tl3}
Let $p$ and $q$ be multiaffine real stable polynomials in $\mathbb{R}[z_1, \cdots, z_n]$ and let $r = q(\delta)p$. Then,
\[r(2z_1, \cdots, 2z_n) = \left[\dfrac{\partial^n}{\partial z_1 \, \cdots \, z_n} \overline{q}p\right](z_1, \cdots, z_n),\]
where $\overline{q}$ is the flip of $q$, that is, if we write 
\[q = \sum_{S} a_S z^{S},\]
then,
\[\overline{q}:=\sum_{S}a_{S}z^{[n]\setminus S}.\]
\end{lemma}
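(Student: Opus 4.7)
The plan is to reduce the claim to the algebraic identity $q(\partial)p = \overline{q}\ast p$ and then invoke the definition of the convolution $\ast$ directly. Once this identity is in hand, the chain
\[r(2z_1,\ldots,2z_n) = (\overline{q}\ast p)(2z_1,\ldots,2z_n) = \left[\dfrac{\partial^n}{\partial z_1 \cdots \partial z_n}(\overline{q}\,p)\right](z_1,\ldots,z_n)\]
follows immediately from the definition of $\ast$, which is exactly the conclusion of the lemma.

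To verify $q(\partial)p = \overline{q}\ast p$, I would exploit bilinearity: both sides are linear in $q$, so it suffices to check the identity for $q = z^T$ for a single subset $T\subset[n]$. Then $\overline{q} = z^{[n]\setminus T}$ and $q(\partial)p = \partial^T p$. Writing $p = \sum_{R\subset[n]} p_R z^R$ and using that $p$ is multiaffine gives $\partial^T p = \sum_{R\supset T} p_R\, z^{R\setminus T}$.

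For the right-hand side I would apply Lemma \ref{tl1} with the pair $(\overline{q},p) = (z^{[n]\setminus T},p)$. In the inner sum from tl1, only the term $R' = [n]\setminus T$ contributes, and the constraints $R'\cup R=[n]$ and $R'\cap R = S$ translate respectively into $R\supset T$ and $S = R\setminus T$. Summing over the surviving indices produces $\sum_{R\supset T} p_R\, z^{R\setminus T}$, matching $\partial^T p$ monomial by monomial. Bilinearity in $q$ then extends the identity $q(\partial)p = \overline{q}\ast p$ to arbitrary multiaffine $q$.

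The argument is essentially coefficient bookkeeping: the only step requiring any care is the translation of the set-theoretic conditions appearing in Lemma \ref{tl1} under the substitution $R' = [n]\setminus T$, which is what motivates the flip $q \mapsto \overline{q}$ in the statement. I do not anticipate any genuine obstacle beyond keeping the indices straight.
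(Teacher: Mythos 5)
Your proof is correct. The paper's own proof of Lemma~\ref{tl3} is a direct computation: it reduces to the case where \emph{both} $p=z^S$ and $q=z^T$ are monomials and evaluates $\tfrac{\partial^n}{\partial z_1\cdots\partial z_n}\bigl(z^{[n]\setminus T}z^S\bigr)$ explicitly, tracking the $2^{|R|}$ factor by hand to match the $(2z_1,\ldots,2z_n)$ normalization. You instead reduce only $q$ to a monomial $z^T$, keep $p$ in its coefficient expansion, and route the computation through the combinatorial formula of Lemma~\ref{tl1}, so the set-theoretic constraints $R'\cup R=[n]$, $R'\cap R=S$ collapse to $R\supset T$, $S=R\setminus T$. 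Both arguments rest on the same elementary bookkeeping, but yours is slightly more modular: by invoking Lemma~\ref{tl1} you avoid redoing the $\partial^n$ computation and the associated powers of $2$, and the role of the flip $q\mapsto\overline{q}$ is made transparent as exactly the substitution that turns the constraint $R'\cup R=[n]$ into $R\supset T$. The two proofs are otherwise interchangeable in length and content.
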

\begin{proof}
By linearity, it is enough to check this when $p$ and $q$ are monomials. Let $p = z^{S}$ and $q = z^{T}$. Then, $r := q(\delta)p = z^{S \setminus T}$ if $S \supset T$ and is zero otherwise. We have that, 
\[ \left[\dfrac{\partial^n}{\partial z_1 \, \cdots \, z_n} \overline{q}p\right](z_1, \cdots, z_n) = \left[\dfrac{\partial^n}{\partial z_1 \, \cdots \, z_n} z^{[n]\setminus T}z^{S}\right](z_1, \cdots, z_n). \]
This is zero if $T$ is not a subset of $S$ and otherwise, letting $S = T \cup R$, we have, 
\[\left[\dfrac{\partial^n}{\partial z_1 \, \cdots \, z_n} \overline{q}p\right] = \left[\dfrac{\partial^n}{\partial z_1 \, \cdots \, z_n} z^{([n]\setminus S) \cup R}z^{S}\right] = \left[\dfrac{\partial^n}{\partial z_1 \, \cdots \, z_n} z^{[n] \cup R}\right] = 2^{|R|}z^{R} .\]
The assertion follows. 
\end{proof}

Combining lemmas (\ref{tl1}) and (\ref{tl3}),  we get, 
\begin{proposition}
Let $p$ and $q$ be multiaffine real stable polynomials in $\mathbb{R}[z_1, \cdots, z_n]$ and let $r = q(\delta)p$. Then,
\[ r(z_1+y_1, \cdots, z_n+y_n) = \left[\prod_{i = 1}^{n} \left(\dfrac{\partial}{\partial z_i} + \dfrac{\partial}{\partial y_i}\right) \right]  p(z_1, \cdots, z_n) \overline{q}(y_1, \cdots, y_n).\]
\end{proposition}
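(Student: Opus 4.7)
The plan is to reduce the statement to the convolution identity already worked out in Lemma \ref{tl2}, using Lemma \ref{tl3} as a bridge.

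First I would observe that Lemma \ref{tl3} says
\[
r(2z_1, \cdots, 2z_n) = \left[\dfrac{\partial^n}{\partial z_1 \, \cdots \, \partial z_n} \overline{q}\,p\right](z_1, \cdots, z_n),
\]
and by comparing with the definition of $p \ast \overline{q}$, the right-hand side is precisely $(p \ast \overline{q})(2z_1, \cdots, 2z_n)$. Hence $r = p \ast \overline{q}$ as polynomials.

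Next I would apply Lemma \ref{tl2} to the pair $(p, \overline{q})$. The flip $\overline{q}$ is multiaffine whenever $q$ is, since flipping only permutes the monomials $z^S \leftrightarrow z^{[n] \setminus S}$. One mild point to address: Lemma \ref{tl2} is stated for real stable multiaffine polynomials, but its proof is a purely combinatorial bookkeeping identity that does not actually invoke real stability — so it applies to $(p, \overline{q})$ even if $\overline{q}$ is not known to be real stable. This yields
\[
(p \ast \overline{q})(z_1+y_1, \cdots, z_n+y_n) = \left[\prod_{i = 1}^{n} \left(\dfrac{\partial}{\partial z_i} + \dfrac{\partial}{\partial y_i}\right)\right] p(z_1, \cdots, z_n)\, \overline{q}(y_1, \cdots, y_n).
\]
Substituting $r = p \ast \overline{q}$ into the left-hand side gives the claimed identity.

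There is no genuine obstacle here; the proposition is a formal juxtaposition of the two previous lemmas. If one preferred to avoid invoking Lemma \ref{tl2} (or worried about the real stability hypothesis in its statement), one could instead verify the identity directly on monomials $p = z^{S}$ and $q = z^{T}$ by bilinearity, which reduces to showing both sides equal $\prod_{i \in S \setminus T}(z_i+y_i)$ when $T \subset S$ and vanish otherwise — the same kind of combinatorial check performed in Lemmas \ref{tl2} and \ref{tl3}.
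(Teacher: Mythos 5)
Your proof is correct and follows the same route the paper intends: Lemma \ref{tl3} identifies $r = q(\partial)p$ with the convolution $p \ast \overline{q}$, and then Lemma \ref{tl2} (applied to the pair $(p,\overline{q})$) rewrites $(p \ast \overline{q})(z_1+y_1,\dots,z_n+y_n)$ in the claimed $\prod_i(\partial_{z_i}+\partial_{y_i})$ form. The paper's text says ``combining lemmas (\ref{tl1}) and (\ref{tl3})'', but this appears to be a typo for \ref{tl2} and \ref{tl3}; Lemma \ref{tl1} gives the coefficient formula, not the differential-operator expression needed here. Your side remark about the hypothesis of Lemma \ref{tl2} is also fine, though in fact the flip $\overline{q}(z) = z^{[n]}q(1/z_1,\dots,1/z_n)$ of a multiaffine real stable polynomial is itself real stable (the map $z\mapsto 1/z$ interchanges upper and lower half-planes, and a real polynomial nonvanishing on one is nonvanishing on the other), so Lemma \ref{tl2} applies to $(p,\overline{q})$ even as literally stated.
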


Notice that this expression involves a differential operator with multiaffine symbol applied to another multiaffine polynomial. In the next section, we will exploit this to get non-trivial root shift estimates. 

\section{An analytical Lieb-Sokal lemma}

In this section, we will prove the following proposition for getting root estimates,

	\begin{theorem}[An analytical Lieb-Sokal lemma]\label{ALS}
Let $p$ and $q$ be multiaffine real stable polynomials in $\mathbb{R}[z_1, \cdots, z_n]$ and let $r = q(\partial)p$. Let $\textbf{a}$ lie above the roots of $p$ and $\textbf{b}$ lie above the roots of $\overline{q}$. Then, 
\[\textbf{c} := \textbf{a}+\textbf{b} - \left(\dfrac{1}{\varphi_1},\cdots, \dfrac{1}{\varphi_n}\right),\]
lies above the roots of $q(\partial)p$ where, 
\[\varphi_i = \dfrac{\partial_i \,p}{p}(\textbf{a}) + \dfrac{\partial_i \, \overline{q}}{\overline{q}}(\textbf{b}), \qquad i \in [n].\] 
\end{theorem}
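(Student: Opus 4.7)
The proof will build on the factorization identity proved at the end of Section~3:
\[
r(z_1+y_1,\ldots,z_n+y_n) \;=\; \left[\prod_{i=1}^{n}\left(\frac{\partial}{\partial z_i}+\frac{\partial}{\partial y_i}\right)\right] p(\mathbf{z})\,\overline{q}(\mathbf{y}).
\]
Setting $P(\mathbf{z},\mathbf{y}):=p(\mathbf{z})\overline{q}(\mathbf{y})$, this polynomial is multiaffine real stable in $2n$ variables and the concatenated point $(\mathbf{a},\mathbf{b})$ lies above all of its roots. Because $P$ factors, the barriers decouple as $\partial_{z_i}P/P(\mathbf{a},\mathbf{b}) = \partial_i p/p(\mathbf{a})$ and $\partial_{y_i}P/P(\mathbf{a},\mathbf{b}) = \partial_i \overline{q}/\overline{q}(\mathbf{b})$, so their sum is exactly the $\varphi_i$ appearing in the theorem. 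Choosing the splitting $\mathbf{a}'=\mathbf{a}$, $\mathbf{b}'=\mathbf{b}-(1/\varphi_1,\ldots,1/\varphi_n)$ in the factorization reduces the theorem to showing that $(\mathbf{a}',\mathbf{b}')$ lies above the roots of $Q:=\prod_{i}(\partial_{z_i}+\partial_{y_i})P$.

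The plan is to prove this via a Marcus--Spielman--Srivastava style multivariate barrier argument, processing the index pairs $(z_i,y_i)$ one at a time. The inductive hypothesis at step $k$ will be that the polynomial $P_k$ obtained after applying the first $k$ operators is real stable, the current partially-shifted point lies above its roots, and each remaining paired barrier $(\partial_{z_j}P_k+\partial_{y_j}P_k)/P_k$ for $j>k$ is bounded above by the original $\varphi_j$. The single-step lemma I need is an analogue of the MSS barrier lemma of~\cite{MSS2} for the operator $\partial_{z_1}+\partial_{y_1}$: if $P$ is multiaffine real stable, $(\mathbf{a},\mathbf{b})$ is above its roots, and $\varphi_1:=(\partial_{z_1}P/P+\partial_{y_1}P/P)(\mathbf{a},\mathbf{b})>0$, then decreasing $b_1$ by $1/\varphi_1$ keeps the point above the roots of $P':=(\partial_{z_1}+\partial_{y_1})P$, and the paired barriers in the remaining directions do not increase.

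To prove the one-step lemma, expand $P = P_{00}+z_1P_{10}+y_1P_{01}+z_1y_1P_{11}$ as a multiaffine polynomial in $z_1,y_1$ (with coefficients depending on the other $2n-2$ variables), so that $P'=(P_{10}+P_{01})+(z_1+y_1)P_{11}$ is linear in $z_1+y_1$. The main technical input is the Newton inequality for multiaffine real stable polynomials at a point above roots,
\[
P\cdot\partial_{z_1}\partial_{y_1}P \;\le\; \partial_{z_1}P\cdot\partial_{y_1}P,
\]
obtained by translating the polynomial so that the point becomes the origin and invoking the characterising coefficient-level inequality $P_{00}P_{11}\le P_{10}P_{01}$ for multiaffine real stability in two variables. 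Dividing by $P(\mathbf{a},\mathbf{b})^2$ yields $\partial_{z_1}\partial_{y_1}P/P\le\varphi_1^z\varphi_1^y\le(\varphi_1^z+\varphi_1^y)^2=\varphi_1^2$, and a direct computation on $P'$ then shows that its unique root along the line $b_1\mapsto b_1-t$ lies strictly beyond $t=1/\varphi_1$, so the shift is safe. The main obstacle I anticipate is the barrier-non-increase statement for the other indices $j\neq 1$: the standard MSS machinery is calibrated to the multiplicatively-shifted operator $(1-\partial_i)$ rather than to the additive operator $(\partial_{z_1}+\partial_{y_1})$, and adapting the monotonicity/convexity estimates for barriers to the additive shift $1/\varphi_1$ will require applying the Newton inequality jointly to each of the mixed coordinate pairs $(z_1,z_j)$, $(z_1,y_j)$, $(y_1,z_j)$, $(y_1,y_j)$, together with a careful tracking of how these bounds compose.
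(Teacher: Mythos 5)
Your reduction to the $2n$-variable polynomial $P(\mathbf{z},\mathbf{y})=p(\mathbf{z})\overline q(\mathbf{y})$ via the factorization identity is exactly the paper's first move, and the decoupling of the barriers $\partial_{z_i}P/P+\partial_{y_i}P/P=\varphi_i$ is right. Your one-step ``no-zero'' computation (Newton's inequality $P\,\partial_{z_1}\partial_{y_1}P\le\partial_{z_1}P\,\partial_{y_1}P$, hence the root of $(\partial_{z_1}+\partial_{y_1})P$ along the shift line lies beyond $1/\varphi_1$) is also correct as far as it goes.

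But the obstacle you flag at the end is not a loose end to be tidied up later; it \emph{is} the theorem. The entire content of the inductive argument is that after applying $(\partial_{z_1}+\partial_{y_1})$ and shifting by $1/\varphi_1$, the paired barriers $(\partial_{z_j}+\partial_{y_j})P_1/P_1$ for every $j\neq 1$ do not increase, so that the budget $1/\varphi_j$ computed from the original point remains available at step $j$. Without this, the iteration has no basis. You do not prove it, and the route you sketch --- ``applying the Newton inequality jointly to each of the mixed coordinate pairs $(z_1,z_j)$, $(z_1,y_j)$, $(y_1,z_j)$, $(y_1,y_j)$, together with a careful tracking of how these bounds compose'' --- is speculative. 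Newton/Rayleigh inequalities on pairs give pointwise two-variable information, but the barrier-non-increase claim is genuinely a monotonicity-plus-convexity statement across a shift, and it is not evident how the pairwise Newton inequalities would compose to give it.

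What the paper actually does at this point is Lemma~\ref{shift}: for multiaffine real stable $p$ and $\textbf{a}$ above its roots, if $\delta_i+\delta_j\le p/(\partial_i p+\partial_j p)(\textbf{a})$ then $\Phi^k_{(\partial_i+\partial_j)p}(\textbf{a}-\delta_i e_i-\delta_j e_j)\le\Phi^k_p(\textbf{a})$ for every $k$. The proof exploits multiaffinity to write $(\partial_i+\partial_j)p$ after the shift as $(\partial_i+\partial_j)p(\textbf{a})-(\delta_i+\delta_j)\partial_i\partial_j p(\textbf{a})$, reduces the desired inequality by clearing denominators to a statement about $\partial_k$ of the barrier in the direction $e_i+e_j$, and closes it using the \emph{convexity} of barrier functions in cone directions (Lemma~\ref{convex}, item (3): $D_w^2\Phi^v_p(\textbf{a})\ge 0$) together with the monotonicity (item (2)). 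That convexity input is what replaces the MSS $(1-\partial)$ calculus for the additive operator $\partial_{z_i}+\partial_{y_i}$, and it is the piece your proposal is missing. You should replace your planned Newton-inequality bookkeeping with this convexity argument, or prove Lemma~\ref{shift} outright.

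One smaller remark: in the inequality $\varphi_1^z\varphi_1^y\le(\varphi_1^z+\varphi_1^y)^2$ you are giving away a factor of $4$ (AM--GM gives $\varphi_1^z\varphi_1^y\le\tfrac14(\varphi_1^z+\varphi_1^y)^2$); for the crude no-zero conclusion this is harmless, but it is a sign that the one-step estimate is not being used at its full strength, and that strength is precisely what the barrier-non-increase lemma captures systematically.
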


 Recall the following definitions due to Marcus, Spielman and Srivastava \cite{MSS2}. Given a real stable polynomial $p(z_1, \cdots, z_n)$, a point $\textbf{a}$ is said to \emph{above the roots} of $p$ if,
\[p(\textbf{a} + \textbf{t}) \neq 0, \quad \forall \textbf{t} \in \mathbb{R}^n_{+}.\]
 Also, the logarithamic potential function in the direction $k$ for a real stable polynomial $p$ at a point $\textbf{a}$ above its roots is defined as, 
\[\Phi^{k}_{p}(\textbf{a}) := \dfrac{\partial_k p}{p}(\textbf{a}).\]
We will consider a slight generalization of this: Given a unit vector $v \in \mathbb{R}^n_{+}$, we will need the directional derivative of $\operatorname{log}(p)$ in the direction $v$, 
\[\Phi^{v}_{p}(\textbf{a}) := \dfrac{D_v p}{p}(\textbf{a}).\]
 We will require the following fact, which is a slight generalization of a result of MSS. 
\begin{lemma}\label{convex}
Let $p(z_1, \cdots, z_n)$ be real stable and let $\textbf{a}$ be above the roots of $p$. Then, for any two unit vectors $v, w \in \mathbb{R}^{n}_{+}$, we have that,
\begin{enumerate}
\item $\Phi^{v}_{p}(\textbf{a}) \geq 0$.
\item $D_{w}\left(\Phi^{v}_{p}\right)(\textbf{a}) \leq 0$.
\item $D^2_{w}\left(\Phi^{v}_{p}\right)(\textbf{a}) \geq 0$.
\end{enumerate}
\end{lemma}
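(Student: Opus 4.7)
The plan is to reduce each of the three assertions to its coordinate-direction counterpart, which is the content of the MSS barrier lemma (Marcus--Spielman--Srivastava, Interlacing Families II). Since $v \in \mathbb{R}^{n}_{+}$, expand $v = \sum_{i=1}^{n} v_i e_i$ with $v_i \geq 0$. By linearity of the directional derivative in its direction argument,
\[
\Phi^v_p = \sum_{i=1}^n v_i \Phi^{e_i}_p, \qquad D_w \Phi^v_p = \sum_{i=1}^n v_i D_w \Phi^{e_i}_p, \qquad D_w^2 \Phi^v_p = \sum_{i=1}^n v_i D_w^2 \Phi^{e_i}_p.
\]
Because $v_i \geq 0$, it therefore suffices to establish the three sign assertions for the coordinate barrier $\Phi^{e_i}_p$ with an arbitrary positive direction $w \in \mathbb{R}^n_+$.

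For part (1), I would restrict $p$ to the line $\textbf{a} + t e_i$; the univariate $t \mapsto p(\textbf{a} + t e_i)$ is real rooted, and the assumption that $\textbf{a}$ lies above the roots forces every root $\lambda_k$ of this restriction to lie strictly below $a_i$, so that $\Phi^{e_i}_p(\textbf{a}) = \sum_k (a_i - \lambda_k)^{-1}$ is a sum of strictly positive terms. For parts (2) and (3), I would consider the bivariate restriction $g(s,t) := p(\textbf{a} + s w + t e_i)$, which is bivariate real stable because $w$ has nonnegative components, with $(0,0)$ above its roots. Factoring in $t$ at each $s$, $g(s,t) = c(s)\prod_k(t - t_k(s))$ with $t_k(0) < 0$, yields the identity
\[
\Phi^{e_i}_p(\textbf{a} + sw) = \sum_k \frac{1}{-t_k(s)}.
\]
Part (2) then reduces to the sign of $t_k'(0)$: real stability of $g$ forces each root $t_k(s)$ to be non-increasing as $s$ grows in the positive direction, so $t_k'(0) \leq 0$ and the sum differentiates to something non-positive.

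The main obstacle is the convexity assertion in part (3), since the second derivative of $s \mapsto \sum_k (-t_k(s))^{-1}$ depends on both $t_k'(s)$ and $t_k''(s)$, and the latter is not controlled by monotonicity alone. My plan here is to invoke a determinantal representation of the bivariate real stable polynomial $g$ in the form $\det(sA + tB + C)$ with $A, B \succeq 0$ and $C$ hermitian; the desired mixed third derivative of $\log g$ then reduces to $2\,\mathrm{tr}\big(\tilde A^{2} \tilde B\big)$ with $\tilde A = M^{-1/2} A M^{-1/2}$, $\tilde B = M^{-1/2} B M^{-1/2}$, and $M = sA + tB + C \succ 0$, which is manifestly nonnegative because $\tilde A$ and $\tilde B$ are both positive semidefinite. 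The general case follows by the standard approximation of real stable polynomials by determinantal ones, and combining these coordinate-direction inequalities with the linearity display above yields all three assertions of the lemma.
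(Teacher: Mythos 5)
Your proof is correct, but it takes a noticeably different and heavier route than the paper. The paper's proof is very short: it sets $q(z_1,z_2) := p(\textbf{a} + z_1 v + z_2 w)$, checks directly from positivity of $v$ and $w$ that $q$ is bivariate real stable with $(0,0)$ above its roots, observes $\Phi^{v}_{p}(\textbf{a}) = \Phi^{1}_{q}(0,0)$, $D_w\Phi^{v}_{p}(\textbf{a}) = \partial_2 \Phi^{1}_{q}(0,0)$, $D_w^2\Phi^{v}_{p}(\textbf{a}) = \partial_2^2 \Phi^{1}_{q}(0,0)$, and then cites the barrier lemma of \cite{MSS2} for the bivariate polynomial $q$. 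Note in particular that the paper absorbs the direction $v$ into the bivariate substitution, so your opening step of expanding $v = \sum_i v_i e_i$ and invoking linearity of $\Phi^v_p$ in $v$ is correct but unnecessary --- you could equally well have set $g(s,t) = p(\textbf{a} + sw + tv)$ and dispensed with the coordinate reduction entirely. For parts (1) and (2) your root-tracking argument (roots of the univariate/bivariate restrictions lie below, and are non-increasing in the positive direction by the implicit-function-theorem sign argument for real stable polynomials) is a sound and essentially self-contained re-derivation of what \cite{MSS2} prove. Where the two proofs really diverge is part (3): you invoke the Helton--Vinnikov / Borcea--Br\"and\'en determinantal representation $g(s,t) = \pm\det(sA + tB + C)$ with $A,B \succeq 0$, and the trace computation $\partial_s^2 \partial_t \log\det M = 2\,\mathrm{tr}(\tilde A^2 \tilde B) \geq 0$ is correct. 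This works, but it is considerably heavier machinery than what \cite{MSS2} actually use, which is an elementary argument with no determinantal representations (and which the paper simply quotes). So: correct, but trading a citation of an elementary barrier lemma for a self-contained argument that leans on a deep representation theorem.
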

\begin{proof}
Define a bivariate polynomial in the following fashion, 
\[q(z_1, z_2):= p(\textbf{a} + z_1 v + z_2 w).\]
We claim that $q$ is real stable. We need to check that if $\operatorname{Im}(z_1)$ and $\operatorname{Im}(z_2)$ are positive, then $q(z_1, z_2) \neq 0$. Noting that $v, w \in \mathbb{R}^n_{+}$, we see that, 
\[\operatorname{Im}(\textbf{a} + z_1 v + z_2 w) = \operatorname{Im}(z_1) v + \operatorname{Im}(z_2) w \in \mathbb{R}^n_{+}.\]
The desired conclusion follows from the real stability of $p$. Results from \cite{MSS2} show that, 
\[\Phi_q^{1}(0,0) \geq 0, \quad \partial_2 \Phi_q^{1}(0,0) \leq 0, \quad \partial_2^2 \Phi_q^{1}(0,0) \geq 0.\]

Next, it is clear given that $\textbf{a}$ is above the roots of $p$, that $(0, 0)$ is above the roots of $q$. We check that, 
\[\Phi_p^{v}(\textbf{a}) =\Phi_q^{1}(0,0),\quad  D_{w}\Phi_p^{v}(\textbf{a}) = \partial_2 \Phi_q^{1}(0, 0), \quad  D_{w}^{2}\Phi_p^{v}(\textbf{a}) = \partial_2^{2} \Phi_q^{1}(0, 0),\]
yielding the desired conclusions.
\end{proof}

En route to the Analytical Lieb-Sokal lemma, we prove a lemma on how potentials change upon taking certain derivatives.
\begin{lemma}\label{shift}
Let $p$ be a multiaffine real stable polynomials in $\mathbb{R}[z_1, \cdots, z_n]$ and $\textbf{a}$ be above the roots of $p$. Then, for any $i, j, k \in [n]$ we have that if,
\[\delta_i + \delta_j \leq \dfrac{p}{\partial_i p+ \partial_j p}(\textbf{a}),\]
then, 
\[\Phi^{k}_{(\partial_i + \partial_j)p}(\textbf{a}
 - \delta_i e_i - \delta_j e_j)   \leq \Phi_p^k(\textbf{a}).\]
\end{lemma}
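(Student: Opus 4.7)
The plan is to exploit multiaffinity to reduce the inequality to a concrete algebraic statement in the partial derivatives of $p$ at $\textbf{a}$, and then establish that statement by combining Lemma \ref{convex} with the real stability of an auxiliary bivariate slice of $p$. Using $\partial_i^2 p = \partial_j^2 p = 0$ I would first expand
\[
(\partial_i+\partial_j)p(\textbf{a}') = (P_i+P_j) - \delta P_{ij}, \qquad
\partial_k(\partial_i+\partial_j)p(\textbf{a}') = (P_{ik}+P_{jk}) - \delta P_{ijk},
\]
where $\delta := \delta_i+\delta_j$ and $P_S$ denotes the corresponding partial derivative of $p$ at $\textbf{a}$. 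A pleasant observation is that both expressions depend only on the sum $\delta$ and not on $\delta_i, \delta_j$ separately. Moreover, $(P_i+P_j) - \delta P_{ij}$ stays strictly positive on the allowed range, as one checks from $PP_{ij}\leq P_iP_j$ (Lemma \ref{convex}(2)). After cross-multiplying, the inequality becomes linear in $\delta$, so it suffices to verify the two endpoints $\delta=0$ and $\delta=P/(P_i+P_j)$.

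At $\delta=0$ the inequality reads $P(P_{ik}+P_{jk})\leq P_k(P_i+P_j)$; two applications of Lemma \ref{convex}(2), with $v=e_k$ and $w=e_i$ respectively $w=e_j$, give $PP_{ik}\leq P_iP_k$ and $PP_{jk}\leq P_jP_k$, which sum to the claim. At the other endpoint, after clearing denominators, the inequality becomes
\[
P_k\bigl[(P_i+P_j)^2 - PP_{ij}\bigr] \;\geq\; P\bigl[(P_i+P_j)(P_{ik}+P_{jk}) - PP_{ijk}\bigr].
\]
The special cases $k\in\{i,j\}$ of this are dispatched by a short direct calculation using only $PP_{ij}\leq P_iP_j$, so the heart of the proof is the genuinely trivariate case $k\notin\{i,j\}$.

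In that case I would specialize $z_l=a_l$ for $l\notin\{i,j,k\}$ (which preserves both real stability and multiaffinity) to reduce to $n=3$, and then study the real stable bivariate restriction
\[
\tilde p(s,w) := p(\textbf{a}+s(e_i+e_j)+we_k) = \alpha(s)+w\beta(s),
\]
where $\alpha,\beta$ are explicit quadratics in $s$ whose coefficients are precisely the partials entering the target inequality. Real stability of $\tilde p$ in $(s,w)$ is equivalent to $(\alpha,\beta)$ forming a Hermite--Biehler pair, forcing the Wronskian $W(s) := \alpha(s)\beta'(s) - \alpha'(s)\beta(s)$ to have constant sign on $\mathbb{R}$; since $W(0) = P(P_{ik}+P_{jk}) - P_k(P_i+P_j)\leq 0$ by the previous paragraph, $W\leq 0$ everywhere. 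Writing $W(s) = Ks^2 - 2Rs - L$ explicitly, non-positivity on all of $\mathbb{R}$ produces a sign condition $K\leq 0$ together with the discriminant bound $R^2\leq|K|L$, scalar inequalities constraining the third-order partial $P_{ijk}$.

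The main obstacle I anticipate is the final algebraic step: extracting the endpoint target from these Wronskian constraints. The Wronskian encodes only the diagonal slice in direction $e_i+e_j$, whereas the target involves a somewhat more general combination of third-order partials. If the Wronskian bound alone is insufficient, I would supplement it with the Strongly Rayleigh inequalities $\partial_ip\cdot\partial_jp\geq p\cdot\partial_{ij}p$ evaluated at shifted real points such as $\textbf{a}+\lambda e_k$, which yield further bivariate bounds like $P_iP_{jk}\geq PP_{ijk}$; combining these with the Wronskian discriminant should yield the target inequality and complete the proof.
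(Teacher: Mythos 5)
Your overall strategy --- expand using multiaffinity, observe the cross-multiplied inequality is linear in $\delta=\delta_i+\delta_j$, and verify the two endpoints $\delta=0$ and $\delta=P/(P_i+P_j)$ --- is sound and closely parallels the structure of the paper's proof. The $\delta=0$ endpoint, positivity of the denominator on the allowed range, and the cases $k\in\{i,j\}$ are all handled correctly. But there is a genuine gap at the $\delta_{\max}$ endpoint for $k\notin\{i,j\}$, which you anticipate but do not close, and the Wronskian route you sketch cannot close it on its own.

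To see the gap concretely, write the Wronskian as $W(s)=A+Bs+Cs^2$ with
\[
A=P(P_{ik}+P_{jk})-(P_i+P_j)P_k,\quad B=2\bigl(PP_{ijk}-P_{ij}P_k\bigr),\quad C=(P_i+P_j)P_{ijk}-P_{ij}(P_{ik}+P_{jk}),
\]
and note that your endpoint target rearranges to the single linear statement $P\cdot B - 2(P_i+P_j)\cdot A\geq 0$. The constraint ``$W\leq0$ on $\mathbb R$'' is equivalent to $A\leq0$, $C\leq0$, $B^2\leq 4AC$. These are compatible with, say, $(A,B,C)=(-1,-2.5,-2)$ after scaling $P=P_i+P_j=1$, which violates the target $B+2\geq 0$. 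So the Wronskian sign condition by itself does not imply the endpoint inequality; one really needs the additional constraint $(P_i+P_j)^2|A|\geq P^2|C|$, and it is not clear (and you do not show) that the auxiliary Strongly Rayleigh inequalities you list provide it.

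The ingredient you are missing is the third part of Lemma \ref{convex}, which you never invoke. A short computation with the log-derivative identity $\dfrac{D^2p}{p}=D^2\log p+\bigl(\tfrac{Dp}{p}\bigr)^2$, $D:=\partial_i+\partial_j$, gives at $\textbf{a}$
\[
D^2\Bigl(\frac{\partial_k p}{p}\Bigr)(\textbf{a})\;=\;\frac{2}{P^3}\Bigl(P_k\bigl[(P_i+P_j)^2-PP_{ij}\bigr]-P\bigl[(P_i+P_j)(P_{ik}+P_{jk})-PP_{ijk}\bigr]\Bigr),
\]
so your $\delta_{\max}$ endpoint target is exactly a positive multiple of $D^2_{e_i+e_j}\Phi^k_p(\textbf{a})$, whose nonnegativity is precisely Lemma \ref{convex}(3). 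This is what the paper's proof does: after cross-multiplying, it rewrites the inequality using the log-derivative identity, then observes $(\partial_i+\partial_j)^2(\partial_k p/p)(\textbf{a})\geq 0$ (convexity) and $\partial_k\bigl((\partial_i+\partial_j)p/p\bigr)(\textbf{a})\leq 0$ (monotonicity), together with the hypothesis $\delta\leq P/(P_i+P_j)$. In other words, the paper handles your two endpoints via Lemma \ref{convex}(2) and Lemma \ref{convex}(3), respectively. Replacing your Wronskian excursion by this single application of Lemma \ref{convex}(3), identified through the log-derivative identity, closes the proof cleanly.
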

\begin{proof}
Let us first assume that $i, j, k$ are all distinct. (The other cases are even easier). We may assume without loss of generality that $i=1$ and $j = 2$ and $k = 3$. Suppose that, 
\[\dfrac{\partial_3(\partial_1 + \partial_2)p}{(\partial_1 + \partial_2)p}(\textbf{a}
 - \delta_1 e_1 - \delta_2 e_2) \leq \dfrac{\partial_3 p}{p}(\textbf{a}).\]
Recall that $p$ is assumed to be multiaffine. We note that, 
\begin{eqnarray*}
(\partial_1 + \partial_2)p(\textbf{a} - \delta_1 e_1 - \delta_2 e_2) &=& (\partial_1 + \partial_2)\left[p(\textbf{a}) - \delta_1 \partial_1p(\textbf{a}) - \delta_2 \partial_2(\textbf{a}) + \delta_1 \delta_2 \partial_1 \partial_2p(\textbf{a})\right]\\
&=& (\partial_1 + \partial_2)p(\textbf{a}) - (\delta_1 + \delta_2)\partial_1 \partial_2 p(\textbf{a}).
\end{eqnarray*}
We need to find $\delta_1$ and $\delta_2$ such that, 
\[\dfrac{\partial_3(\partial_1 + \partial_2)p(\textbf{a}) - (\delta_1 + \delta_2)\partial_1 \partial_2 \partial_3 p(\textbf{a})}{(\partial_1 + \partial_2)p(\textbf{a}) - (\delta_1 + \delta_2)\partial_1 \partial_2 p(\textbf{a})} \leq \dfrac{\partial_3 p}{p}(\textbf{a}).\]
This simplifies to, 

\[\partial_3 \left(\dfrac{(\partial_1 + \partial_2)p}{p}\right)(\textbf{a}) \leq \dfrac{\delta_1 + \delta_2}{2} \partial_3\left(\dfrac{(\partial_1 + \partial_2)^2p}{p}\right)(\textbf{a}).\]
We may rewrite this as, 
\[\partial_3 \left(\dfrac{(\partial_1 + \partial_2)p}{p}\right)(\textbf{a}) \leq \dfrac{\delta_1 + \delta_2}{2} (\partial_1+\partial_2)^2 \left(\dfrac{\partial_3 p}{p}(\textbf{a}) \right) + (\delta_1 + \delta_2)\partial_3 \left(\dfrac{(\partial_1 + \partial_2)p}{p}\right)(\textbf{a})\left(\dfrac{(\partial_1 + \partial_2)p}{p}\right)(\textbf{a}) .\]
Using lemma (\ref{convex}), we observe that, 
 \[ (\partial_1+\partial_2)^2 \left(\dfrac{\partial_3 p}{p} \right)(\textbf{a}) \geq 0.\]
% Letting $D$ be the directional derivative in the direction $e_1 + e_2$, this is the same as showing that, 
 %\[D^{2}\left(\dfrac{\partial_3 p}{p} \right) \geq 0.\]
% Restricting the polynomial $p$ to the subspace spanned by $e_1 + e_2$ and $e_3$ yields a bivariate real stable polynomial with the above as principal directions.  The assertion above now follows from the basic convexity properties of $\Phi_p^{3}$. 
We conclude that if, 
\[\delta_1 + \delta_2 \leq \dfrac{1}{\partial_1 p+ \partial_2 p}(\textbf{a}),\]
then, 
\[\dfrac{\partial_3 (\partial_1 + \partial_2)p}{(\partial_1 + \partial_2)p}(\textbf{a}-\delta_1 e_1 - \delta_2 e_2) \leq \dfrac{\partial_3 p}{p}(\textbf{a}).\]

\end{proof}

Iterating the above argument, we have, 
\begin{lemma}\label{shift2}
Let $p$ be a multiaffine real stable polynomials in $\mathbb{R}[z_1, \cdots, z_{2n}]$ and $\textbf{a}$ be above the roots of $p$. Let $\delta_1, \cdots, \delta_{2n}$ be positive reals such that for each $i \in [n]$, 
\[\delta_i + \delta_{ n+i} \leq \dfrac{p}{\partial_i p+ \partial_{n+i} p}(\textbf{a}).\]
Then, letting, 
\[ q = \left[\prod_{i = 1}^{n} (\partial_i + \partial_{n+i})\right]p,\]
we have that for each $j \in [2n]$,
\[\Phi^{k}_{q}(\textbf{a}
 - \sum_{i = 1}^{2n} \delta_i e_i)   \leq \Phi_p^k(\textbf{a}).\]
In particular, $\textbf{a} - \sum_{i = 1}^{2n} \delta_i e_i$ is above the roots of $q$.

\end{lemma}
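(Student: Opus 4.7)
The plan is to iterate Lemma~\ref{shift} one pair of indices at a time. I would introduce intermediate polynomials $p_0 = p$ and $p_m = (\partial_m + \partial_{n+m}) p_{m-1}$ for $m = 1, \ldots, n$, so that $p_n = q$, together with intermediate points $\mathbf{a}_0 = \mathbf{a}$ and $\mathbf{a}_m = \mathbf{a}_{m-1} - \delta_m e_m - \delta_{n+m} e_{n+m}$, so that $\mathbf{a}_n = \mathbf{a} - \sum_{i=1}^{2n}\delta_i e_i$. The claim to be proved by induction on $m$ is that $\mathbf{a}_m$ lies above the roots of $p_m$ and
\[\Phi^k_{p_m}(\mathbf{a}_m) \leq \Phi^k_p(\mathbf{a}), \qquad \forall k \in [2n].\]
Specialising to $m = n$ then gives the statement of the lemma.

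Before starting the induction I would check that multiaffinity and real stability are preserved at each step, so that Lemma~\ref{shift} remains applicable. Real stability is preserved under partial differentiation. Multiaffinity is preserved by each operator $\partial_m + \partial_{n+m}$ since, for a multiaffine polynomial, $\partial_m$ removes the $z_m$ dependence while $\partial_{n+m}$ leaves the polynomial affine in $z_m$ (and symmetrically in $z_{n+m}$), and neither touches the degree in any other variable.

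The inductive step is the heart of the argument. Assuming the bound at stage $m - 1$, in order to invoke Lemma~\ref{shift} on $p_{m-1}$ at $\mathbf{a}_{m-1}$ with indices $m$ and $n + m$ I need to verify
\[\delta_m + \delta_{n+m} \leq \frac{p_{m-1}}{\partial_m p_{m-1} + \partial_{n+m} p_{m-1}}(\mathbf{a}_{m-1}) = \frac{1}{\Phi^m_{p_{m-1}}(\mathbf{a}_{m-1}) + \Phi^{n+m}_{p_{m-1}}(\mathbf{a}_{m-1})}.\]
By the inductive hypothesis applied with $k = m$ and $k = n+m$, the sum of potentials on the right is bounded above by $\Phi^m_p(\mathbf{a}) + \Phi^{n+m}_p(\mathbf{a}) = \frac{\partial_m p + \partial_{n+m} p}{p}(\mathbf{a})$, so the reciprocal is bounded below by $\frac{p}{\partial_m p + \partial_{n+m} p}(\mathbf{a})$, which is precisely the quantity that the hypothesis of the current lemma forces $\delta_m + \delta_{n+m}$ not to exceed. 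Lemma~\ref{shift} then delivers, for every $k \in [2n]$,
\[\Phi^k_{p_m}(\mathbf{a}_m) \leq \Phi^k_{p_{m-1}}(\mathbf{a}_{m-1}) \leq \Phi^k_p(\mathbf{a}),\]
which closes the induction; finiteness of these potentials gives that $\mathbf{a}_m$ lies above the roots of $p_m$.

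The one subtle point is exactly the transfer just described: the hypothesis of this lemma is stated in terms of $p$ and $\mathbf{a}$, whereas Lemma~\ref{shift} at stage $m$ demands a condition phrased in terms of $p_{m-1}$ and $\mathbf{a}_{m-1}$. It is the monotonicity of potentials built into Lemma~\ref{shift} that allows the original bound to propagate through all $n$ iterations without deterioration; without this monotonicity one would accumulate a loss at every step and would be forced into a strictly stronger hypothesis than the one stated.
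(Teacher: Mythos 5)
Your proposal is correct and takes the same approach as the paper, which dispatches this lemma with the single phrase ``Iterating the above argument'' and leaves the details to the reader. The induction you set up, with the crucial observation that the monotonicity $\Phi^k_{p_m}(\mathbf{a}_m) \leq \Phi^k_p(\mathbf{a})$ is exactly what lets the hypothesis on $\delta_m + \delta_{n+m}$ (stated only for $p$ at $\mathbf{a}$) remain valid for $p_{m-1}$ at $\mathbf{a}_{m-1}$ at every stage, is precisely the intended iteration.
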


Let us now prove the Analytical Lieb-Sokal lemma, theorem(\ref{ALS}).

\begin{proof}
Let us apply lemma (\ref{shift2}) to the multiaffine real stable polynomial, 
\[ r(z_1+y_1, \cdots, z_n +y_n) = f(z_1, \cdots, z_n, y_1, \cdots, y_n) :=  \left[\prod_{i = 1}^{n} \left(\dfrac{\partial}{\partial z_i} + \dfrac{\partial}{\partial y_i}\right) \right]  p(\textbf{z})\overline{q}(\textbf{y}),\]
at a point above its roots $(\textbf{a}, \textbf{b})$. Letting $g(\textbf{z},\textbf{y}) =  p(\textbf{z})\overline{q}(\textbf{y})$, we note that, 
\[\dfrac{g}{\partial_{z_i} g+ \partial_{y_i} g}(\textbf{a}, \textbf{b}) = \dfrac{1}{\dfrac{\partial p}{\partial z_i} (\textbf{a})+ \dfrac{\partial \overline{q}}{\partial y_i}(\textbf{b})}.\]
We conclude that, 
 \[\textbf{c} := \textbf{a}+\textbf{b} - \left(\dfrac{1}{\varphi_1},\cdots, \dfrac{1}{\varphi_n}\right),\]
lies above the roots of $r = q(\partial)p$ where, 
\[\varphi_i = \dfrac{\partial_i \,p}{p}(\textbf{a}) + \dfrac{\partial_i \, \overline{q}}{\overline{q}}(\textbf{b}), \qquad i \in [n].\] 

\end{proof}

This result can easily be extended to the case when $p$ is not necessarily multiaffine, but $q$ still is. Recall the notion of the polarization of a polynomial $p(z_1, \cdots, z_n)$ of degree at most $k$ in any of the variables. Writing out, 
\[p(z_1, \cdots, z_n) = \sum_{\pmb{\kappa} = (\kappa_1, \cdots, \kappa_n) \in \mathbb{Z}_{+, \leq k}^{n}} a_{\pmb{\kappa}}z^{\pmb{\kappa}} = \sum_{\pmb{\kappa} = (\kappa_1, \cdots, \kappa_n) \in \mathbb{Z}_{+,\leq k}^{n}} a_{\pmb{\kappa}}\prod_{i =1}^{n}z_i^{\kappa_i},\]
one defines,
\[Pol(p)(z_{1,1}, \cdots, z_{1,k},z_{2,1}, \cdots, z_{2,k}, \cdots, z_{n,1}, \cdots, z_{n,k})  := \sum_{\pmb{\kappa} = (\kappa_1, \cdots, \kappa_n) \in \mathbb{Z}_{+, \leq k}^{n}} a_{\pmb{\kappa}}\prod_{i =1}^{n}\dfrac{e_{\kappa_i}(z_{i,1}, \cdots, z_{i,k})}{\binom{k}{\kappa_i}}.\]

Note that $\operatorname{Pol}(p)$ is multiaffine. A well known result of Borcea and Branden, an elegant application of the Grace-Walsh-Szego theorem, shows that polarization preserves real stability. We will also use the expression ``$\operatorname{Sym}$'' to denote the reverse operation.  One has the following simple lemma.
\begin{lemma}\label{noma}
Let $p(z_1, \cdots, z_n)$ be a polynomial of maximal degree at most $k$ in each of its variables and let $q(z_1, \cdots, z_n)$ be a multiaffine polynomial. Define, 
\[\tilde{q}(z_{1,1}, \cdots, z_{1,k},z_{2,1}, \cdots, z_{2,k}, \cdots, z_{n,1}, \cdots, z_{n,k}):= q(z_{1,1}+ \cdots + z_{1,k},z_{2,1}+\cdots+ z_{2,k}, \cdots, z_{n,1} +\cdots + z_{n,k}).\]
Then,
\[\operatorname{Sym}\left(\tilde{q}(\partial)\operatorname{Pol}(p)\right) = q(\partial)p.\]

\end{lemma}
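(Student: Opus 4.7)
The plan is to reduce to the monomial case by bilinearity and then compute both sides explicitly. Both $p \mapsto q(\partial)p$ and $p \mapsto \operatorname{Sym}(\tilde q(\partial)\operatorname{Pol}(p))$ are linear in $p$ and in $q$, so it suffices to verify the identity when $p = z^{\pmb\kappa} = \prod_i z_i^{\kappa_i}$ with each $\kappa_i \le k$, and $q = z^S$ for some $S \subset [n]$ (since $q$ is multiaffine). On the right-hand side of the desired identity, $q(\partial)p = \prod_{i\in S}\kappa_i\,\prod_{i\in S}z_i^{\kappa_i-1}\prod_{i\notin S}z_i^{\kappa_i}$, which is zero unless $\kappa_i\ge 1$ for every $i\in S$.

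For the left-hand side, I would use the fact that $\operatorname{Pol}(p)$ factors across the index $i$,
\[
\operatorname{Pol}(p) = \prod_{i=1}^{n} \frac{e_{\kappa_i}(z_{i,1},\ldots,z_{i,k})}{\binom{k}{\kappa_i}},
\]
and that $\tilde q(\partial) = \prod_{i\in S}(\partial_{i,1}+\cdots+\partial_{i,k})$ also factors across $i$, so the two operations act independently in each block of $k$ variables. The key computational lemma I would record is the identity
\[
(\partial_{i,1}+\cdots+\partial_{i,k})\,e_{\kappa}(z_{i,1},\ldots,z_{i,k}) = (k-\kappa+1)\,e_{\kappa-1}(z_{i,1},\ldots,z_{i,k}),
\]
obtained by summing $\partial_j e_\kappa = e_{\kappa-1}(\ldots,\hat z_j,\ldots)$ over $j$ and using that each term of $e_{\kappa-1}$ on the full set of variables appears $k-\kappa+1$ times. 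Combined with the binomial identity $\binom{k}{\kappa}/\binom{k}{\kappa-1} = (k-\kappa+1)/\kappa$, this rearranges into the clean ``derivative rule''
\[
(\partial_{i,1}+\cdots+\partial_{i,k})\,\frac{e_{\kappa_i}}{\binom{k}{\kappa_i}} \;=\; \kappa_i\,\frac{e_{\kappa_i-1}}{\binom{k}{\kappa_i-1}},
\]
which mirrors exactly the rule $\partial_i z_i^{\kappa_i} = \kappa_i z_i^{\kappa_i-1}$.

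Applying this derivative rule once for each $i\in S$ yields
\[
\tilde q(\partial)\operatorname{Pol}(p) \;=\; \left(\prod_{i\in S}\kappa_i\right)\prod_{i\in S}\frac{e_{\kappa_i-1}(z_{i,1},\ldots,z_{i,k})}{\binom{k}{\kappa_i-1}}\prod_{i\notin S}\frac{e_{\kappa_i}(z_{i,1},\ldots,z_{i,k})}{\binom{k}{\kappa_i}},
\]
which is precisely the polarization of the monomial $\prod_{i\in S}\kappa_i z_i^{\kappa_i-1}\prod_{i\notin S}z_i^{\kappa_i}$. Applying $\operatorname{Sym}$ (i.e.\ setting $z_{i,j}=z_i$ for all $j$) collapses each $e_{\kappa_i-1}/\binom{k}{\kappa_i-1}$ back to $z_i^{\kappa_i-1}$ and each $e_{\kappa_i}/\binom{k}{\kappa_i}$ back to $z_i^{\kappa_i}$, producing exactly $q(\partial)p$.

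There is no real obstacle here; the identity is essentially a bookkeeping statement about how polarization intertwines the multiaffine operator $\partial_i$ on $z_i$ with the lifted operator $\partial_{i,1}+\cdots+\partial_{i,k}$ on the polarized variables. The only care needed is to verify the degenerate case: if $\kappa_i = 0$ for some $i\in S$, then $q(\partial)p = 0$, and on the polarized side $e_0 = 1$ forces $\partial_{i,1}+\cdots+\partial_{i,k}$ applied to a constant to vanish, matching up consistently.
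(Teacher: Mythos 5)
Your proof is correct and follows essentially the same route as the paper's: reduce to monomials by bilinearity, then compute the polarized side via the identity $(\partial_{i,1}+\cdots+\partial_{i,k})e_{\kappa} = (k-\kappa+1)e_{\kappa-1}$ and the binomial cancellation $k-\kappa+1 = \kappa\binom{k}{\kappa}/\binom{k}{\kappa-1}$. The only cosmetic difference is that you isolate the ``derivative rule'' on the polarized side and apply $\operatorname{Sym}$ at the very end (and you make the block factorization explicit rather than immediately specializing to $q=z_1$, $p=z_1^m$), whereas the paper applies $\operatorname{Sym}$ early and exploits symmetry of the variables; both are the same bookkeeping.
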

\begin{proof}
By bilinearity, it is enough to prove this in the case $p$ and $q$ are monomials. Further, one readily checks that it is enough to prove this in the case when $q = z_1$ and $p = z_1^{m}$. One then has that $q(\partial)p = mz_{1}^{m-1}$, while, 
\begin{eqnarray*}
\operatorname{Sym}\left(\tilde{q}(\partial)Pol(p) \right)&=& \operatorname{Sym}\left((\sum_{i = 1}^{k}\partial_{1, i})\dfrac{e_{m}(z_{1,1}, \cdots, z_{1,k})}{\binom{k}{m}}\right)\\
&=& k\operatorname{Sym} \left(\dfrac{e_{m-1}(z_{1,2}, \cdots, z_{1,k})}{\binom{k}{m}}\right)\\
&=& k\,z_1^{m-1} \dfrac{\binom{k-1}{m-1}}{\binom{k}{m}}\\
&=& mz_1^{m-1}.
\end{eqnarray*}
This concludes the proof.
\end{proof}

Combining lemma (\ref{noma}) with theorem (\ref{ALS}) yields the following result.

	\begin{theorem}\label{ALS2}
Let $p$ and $q$ be real stable polynomials in $\mathbb{R}[z_1, \cdots, z_n]$ and let $q$ be additionally, multiaffine and let $r = q(\partial)p$. Let $\textbf{a}$ lie above the roots of $p$ and $\textbf{b}$ lie above the roots of $\overline{q}$. Then, 
\[\textbf{c} := \textbf{a}+\textbf{b} - \left(\dfrac{1}{\varphi_1},\cdots, \dfrac{1}{\varphi_n}\right),\]
lies above the roots of $q(\partial)p$ where, 
\[\varphi_i = \dfrac{\partial_{i,1} \,\operatorname{Pol}(p)}{\operatorname{Pol}(p)}(\overline{\textbf{a}}) + \dfrac{\partial_{i,1} \, \tilde{\overline{q}}}{\tilde{\overline{q}}}(\overline{\textbf{b}}), \qquad i \in [n].\]
Here, $\overline{\textbf{a}} = (\overbrace{a_1, \cdots, a_1}^{k}, \cdots, \overbrace{a_n, \cdots, a_n}^{k})$ if $\textbf{a} = (a_1, \cdots, a_n)$ and $\overline{\textbf{b}}$ is defined similarly. 
\end{theorem}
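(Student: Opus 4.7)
My plan is to reduce Theorem \ref{ALS2} to the multiaffine Analytical Lieb--Sokal lemma (Theorem \ref{ALS}) through the polarization identity of Lemma \ref{noma}, which reads $q(\partial)p = \operatorname{Sym}\bigl(\tilde q(\partial)\operatorname{Pol}(p)\bigr)$. Since $\operatorname{Sym}$ amounts to specialization to the diagonal $z_{i,j}=z_i$ on the block-symmetric polynomial $\tilde r:=\tilde q(\partial)\operatorname{Pol}(p)$, controlling the zero-free region of $\tilde r$ in $nk$ variables will translate directly to a zero-free region of $r=q(\partial)p$ in $n$ variables at the corresponding block-replicated point.

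To apply Theorem \ref{ALS} to the pair $(\operatorname{Pol}(p),\tilde q)$, I would first verify that both are multiaffine real stable in $\mathbb{R}[z_{1,1},\ldots,z_{n,k}]$. For $\operatorname{Pol}(p)$ this is the Borcea--Br\"and\'en polarization theorem, while for $\tilde q$ it follows because the substitution $z_i\mapsto z_{i,1}+\cdots+z_{i,k}$ has non-negative coefficients, hence preserves real stability, and manifestly preserves multiaffineness when $q$ is multiaffine. I would then show that if $\mathbf a$ is above the roots of $p$, then the block-replicated point $\overline{\mathbf a}$ is above the roots of $\operatorname{Pol}(p)$; this is a standard consequence of the diagonal identity $\operatorname{Pol}(p)(\overline{\mathbf z})=p(\mathbf z)$ combined with block-symmetry and real stability of $\operatorname{Pol}(p)$. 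An analogous argument, applied to $\overline q$ lifted to $nk$ variables, gives the above-the-roots hypothesis needed on the $\mathbf b$-side.

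With these prerequisites in place, Theorem \ref{ALS} applied at the base point $(\overline{\mathbf a},\overline{\mathbf b})$ yields a point $\overline{\mathbf c}\in\mathbb{R}^{nk}$ above the roots of $\tilde r$, with shift coordinates $\varphi_{i,j}$ indexed by $(i,j)\in[n]\times[k]$. By the block-symmetry of $\operatorname{Pol}(p)$ and $\tilde q$, the values $\varphi_{i,j}$ are constant in $j$ for each fixed $i$, so $\overline{\mathbf c}$ is itself block-replicated and corresponds to an $n$-variable point $\mathbf c$; the final specialization $z_{i,j}=z_i$ transfers the above-the-roots property from $\tilde r$ at $\overline{\mathbf c}$ to $r=\operatorname{Sym}(\tilde r)$ at $\mathbf c$.

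The main technical obstacle I anticipate is matching the $\varphi_i$ produced by this argument with the expression in the statement. A direct chain-rule computation along the block diagonal gives $\partial_{i,1}\operatorname{Pol}(p)/\operatorname{Pol}(p)(\overline{\mathbf a}) = (1/k)\,\partial_i p/p(\mathbf a)$ and an analogous expression on the $\mathbf b$-side involving derivatives of $\overline q$ at the scaled point $k\mathbf b$. Some care is needed here because the flip that arises most naturally from Theorem \ref{ALS} is the $nk$-variable multiaffine flip $\overline{\tilde q}$, whereas the statement is phrased in terms of the substituted flip $\tilde{\overline q}$; reconciling these two expressions at the replicated point $\overline{\mathbf b}$, together with choosing the base point $\overline{\mathbf b}$ consistently with the above-the-roots hypothesis on $\overline q$, is where the bulk of the calculation lies.
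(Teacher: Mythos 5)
Your high-level plan — replace $(p,q)$ by $(\operatorname{Pol}(p),\tilde q)$ via Lemma \ref{noma}, apply the multiaffine Theorem \ref{ALS} at the block-replicated point, use block-symmetry to conclude $\varphi_{i,j}$ is independent of $j$ so the output point is itself block-replicated, then specialize to the diagonal — is exactly what the paper's one-line proof intends. Your preliminary checks (polarization preserves multiaffine real stability; $z_i\mapsto z_{i,1}+\cdots+z_{i,k}$ preserves real stability and multiaffineness; $\overline{\mathbf a}$ is above the roots of $\operatorname{Pol}(p)$ precisely when $\mathbf a$ is above the roots of $p$; $\partial_{i,1}\operatorname{Pol}(p)/\operatorname{Pol}(p)(\overline{\mathbf a})=\tfrac1k\,\partial_i p/p(\mathbf a)$) are all correct.

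The gap you flag at the end, however, is a real one and cannot be ``reconciled'': $\overline{\tilde q}$ and $\tilde{\overline q}$ are genuinely different polynomials, not two expressions for the same thing. Take $n=1$, $k=2$, $q=1-z_1$. Then $\tilde q = 1-z_{1,1}-z_{1,2}$, whose $2$-variable flip is $\overline{\tilde q}=z_{1,1}z_{1,2}-z_{1,1}-z_{1,2}$; by contrast $\overline q=z_1-1$ and $\tilde{\overline q}=z_{1,1}+z_{1,2}-1$. Their above-the-roots regions on the diagonal differ ($b>2$ for $\overline{\tilde q}$ vs.\ $b>1/2$ for $\tilde{\overline q}$), as do their log-derivative potentials at $(b,b)$. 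Since Theorem \ref{ALS} applied to the pair $(\operatorname{Pol}(p),\tilde q)$ unambiguously produces $\overline{\tilde q}$, your reduction will yield $\varphi_i$ with $\partial_{i,1}\overline{\tilde q}/\overline{\tilde q}$ and an above-the-roots hypothesis on $\overline{\tilde q}$ at $\overline{\mathbf b}$; there is no chain-rule manipulation that converts this into the $\tilde{\overline q}$ expression printed in the statement, because the two quantities are numerically unequal. Similarly, ``$\mathbf b$ above the roots of $\overline q$'' (an $n$-variable condition) is strictly weaker than what Theorem \ref{ALS} needs, namely $\overline{\mathbf b}$ above the roots of $\overline{\tilde q}$.

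This mismatch is almost certainly a typo in the statement rather than a deficiency of the reduction: in the mixed-discriminant application that follows (with $q=\prod_i(1-z_i)$), the paper explicitly computes $\overline{\tilde q}=\prod_i\bigl(\prod_j z_{i,j}-\sum_k\prod_{j\neq k}z_{i,j}\bigr)$ and requires $b>r$, exactly the condition coming from $\overline{\tilde q}$ and not from $\overline q$ (which would only need $b>1$). So the correct reading of the statement uses $\overline{\tilde q}$ throughout and replaces the hypothesis on $\mathbf b$ by ``$\overline{\mathbf b}$ lies above the roots of $\overline{\tilde q}$.'' Once you accept that corrected target, your argument closes cleanly and is identical to the paper's; as written, though, the last step of your sketch is aiming at a formula the reduction cannot produce, and the ``analogous argument applied to $\overline q$ lifted to $nk$ variables'' does not supply the hypothesis on $\overline{\tilde q}$ that Theorem \ref{ALS} actually requires.
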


We remark here that for multiaffine polynomimals, we have that,
\[\dfrac{\partial_{i,1} \, \tilde{q}}{\tilde{q}}(\overline{\textbf{b}}) = \dfrac{\partial_{i} \, q}{q}(\textbf{b}).\]
Together with the fact that $\operatorname{Pol}(p) = p$ for multiaffine polynomials, this shows that theorem (\ref{ALS2}) reduces to theorem (\ref{ALS}) when both $p$ and $q$ are multiaffine. 

One of our motivations in proving this result was attempting to get optimal paving estimates in the Kadison-Singer problem. Though this does not yield the desired estimates, it does give non-trivial estimates. There are two ways expected characteristic polynomials can be used to get paving estimates. One could use \emph{mixed discriminants} or one could use \emph{mixed determinants}.

\section{Analytical Lieb-Sokal  and Strongly Rayleigh measures}

Fix a Strongly Rayleigh measure $\mu$ on $2^{[n]}$ and let $A \in M_n(\mathbb{C})$ be hermitian. Recall that this means that the generating polynomial $P_{\mu}$ of the measure, 
\[P_{mu}(z_1, \cdots, z_n) = \sum_{S \subset [n]}\mu(S)z^{S},\]
is real stable. The interlacing families method of MSS can be applied in this setting to relate the eigenvalues of the principal submatrices gotten by sampling with respect to $\mu$ and their expected characteristic polynomial. The theorem that follows was  proved by the author in \cite{MRKSP}[Theorem 2.5]. This expression differs slightly from the one in that paper since we sample $\chi[A(S)]$ rather than $\chi[A_S]$ which leads to the appearance of the flip $\tilde{P}_{\mu}$ in place of $P_{\mu}$. 
  \begin{theorem}\label{SR}
 Let $\mu$ be a Strongly Rayleigh distribution on $\mathcal{P}([n])$ and let $A \in M_n(\mathbb{C})$ be hermitian. Then, 
 \[\mathbb{E} \chi[A(S)] = \sum_{S \subset [n]} \mu(S) \chi[A(S)], \]
 is real rooted and further, 
  \[\mathbb{P} \left[ \lambda_{max} \chi[A(S)] \leq \lambda_{max} \mathbb{E} \chi[A(S)] \right]  > 0.\] 
  Further, we have the following formula for the expected characteristic polynomial, 
   \[\mathbb{E} \chi[A(S)] = \tilde{P}_{\mu}(\partial_1, \cdots, \partial_n)\operatorname{det}[Z-A] \mid_{Z = xI}.\]
  \end{theorem}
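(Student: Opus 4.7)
The strategy is to derive the explicit formula by an elementary expansion of $\det[Z-A]$ and then feed the standard Marcus--Spielman--Srivastava interlacing families machinery, using the closure of Strongly Rayleigh measures under conditioning.

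First I would establish the identity $\mathbb{E}\chi[A(S)] = \tilde P_\mu(\partial)\det[Z-A]\mid_{Z=xI}$ directly. Since each $z_i$ enters $\det[Z-A]$ only through the $(i,i)$ diagonal entry, the determinant is multiaffine in $z_1,\ldots,z_n$ and Laplace expansion gives
$$\det[Z-A] = \sum_{T\subset[n]} z^T\,(-1)^{|T^c|}\det(A[T^c]),$$
so a routine calculation yields $\partial^T\det[Z-A]\mid_{Z=xI}=\chi[A(T^c)](x)$. Writing $\tilde P_\mu(\partial)=\sum_S\mu(S)\partial^{[n]\setminus S}$ and identifying $T^c=S$ then produces the formula.

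Next I would obtain real-rootedness of $\mathbb{E}\chi[A(S)]$ in two stages. The polynomial $\det[Z-A]$ is itself real stable in $(z_1,\ldots,z_n)$: when every $\operatorname{Im} z_i>0$, the matrix $Z-A$ has positive definite imaginary part and is hence invertible. The flip $\tilde P_\mu$ of the multiaffine real stable polynomial $P_\mu$ is again real stable, because the map $p(z)\mapsto z^{[n]}p(1/z_1,\ldots,1/z_n)$ combined with complex conjugation sends real stable polynomials to real stable polynomials. The Lieb--Sokal lemma then ensures that $\tilde P_\mu(\partial)\det[Z-A]$ is real stable, and diagonal specialization $z_1=\cdots=z_n=x$ preserves real stability, leaving a real rooted univariate polynomial.

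For the probabilistic conclusion I would construct a standard MSS interlacing family. Build a binary tree of depth $n$ whose leaves are indexed by $S\subset[n]$ and whose internal node at depth $k$ corresponds to a partial assignment $(s_1,\ldots,s_k)\in\{0,1\}^k$ of whether the first $k$ coordinates belong to $S$. Attach to each node $v$ the partial expectation $q_v(x)=\sum_{S\text{ extending }v}\mu(S)\chi[A(S)]$. Since conditioning a Strongly Rayleigh measure on the values of any subset of coordinates again yields a Strongly Rayleigh measure, each $q_v$ is, up to a positive normalization, of the form of $\mathbb{E}\chi[A(S)]$ for a conditional Strongly Rayleigh distribution, and hence real rooted by the previous paragraph. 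The interlacing families lemma then delivers a leaf whose largest root is at most that of the root polynomial, which is exactly the claimed positive-probability event. The one genuinely non-routine ingredient is the closure of the Strongly Rayleigh class under conditioning (a theorem of Borcea, Br\"and\'en and Liggett that follows from real stability being preserved both by setting a variable to zero and by extracting the coefficient of a variable), and this is the point that needs the most care.
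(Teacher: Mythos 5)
The paper does not actually prove this theorem: immediately above the statement it says it ``was proved by the author in \cite{MRKSP}[Theorem 2.5]'' and merely notes the flip $\tilde P_\mu$ appears because of the sampling convention. So there is no in-paper proof to compare against; what follows is an assessment of your argument on its own merits.

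Your derivation of the formula is correct (the expansion $\det[Z-A]=\sum_U z^U(-1)^{|U^c|}\det A_{U^c}$ gives $\partial^T\det[Z-A]\mid_{Z=xI}=\chi[A_{T^c}]$, and summing against $\tilde P_\mu(\partial)=\sum_S\mu(S)\partial^{S^c}$ yields $\sum_S\mu(S)\chi[A_S]$). The real-rootedness argument is also fine: $\det[Z-A]$ is real stable, the flip of a multiaffine real stable polynomial with real coefficients is real stable since $\tilde p(z)=z^{[n]}p(1/z)$ and $p$ is nonvanishing on both half-planes, Lieb--Sokal applies, and diagonal specialization preserves stability.

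The gap is in the probabilistic step. For the MSS interlacing-families lemma you need, at each internal node, that the two children $q_{v_0},q_{v_1}$ admit a \emph{common interlacing}, which (by the standard Hermite--Kakeya--Obreschkoff criterion used throughout \cite{MSS2}) is equivalent to \emph{every} convex combination $\lambda q_{v_0}+(1-\lambda)q_{v_1}$, $\lambda\in[0,1]$, being real rooted --- not merely that each child and their sum are real rooted. Such a convex combination corresponds to the measure $\mu$ conditioned on the first $k$ coordinates \emph{and then hit with an external field} $\lambda/(1-\lambda)$ on coordinate $k+1$; it is generically not a conditional distribution of $\mu$. So you need the closure of the Strongly Rayleigh class under external fields (also in \cite{BBL09}) in addition to closure under conditioning. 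Your proof invokes only the latter, and the parenthetical justification you give (``setting a variable to zero'' and ``taking the coefficient of a variable'') produces exactly the two endpoint conditionals, not the intermediate convex combinations. Once you add the external-field closure, the argument goes through. There is also a minor technicality worth flagging: when $\mu$ is not homogeneous the leaf polynomials $\chi[A_S]$ have different degrees, so one should either restrict to homogeneous $\mu$ (as the paper in fact does in Theorem \ref{SRALS}) or pad to degree $n$ before invoking the interlacing lemma.
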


The analytical Lieb-Sokal lemma, theorem (\ref{ALS}) can now be immediately applied. Before this, we recall some standard definitions. Given a probability measure $\mu$ on $2^{[n]}$, the marginal probability of an element $i \in [n]$ is the probability that $i$ belongs to a random subset of $[n]$. This marginal probability, denoted $\mathbb{P}_{i \sim S}[\mu]$, can also be expressed as, 
\[\mathbb{P}_{i \sim S}[\mu] = \partial_i P_{\mu}\mid_{z_1 = \cdots = z_n =  1}.\]
Theorem (\ref{ALS}) yields the following quantitative estimate, 
\begin{theorem}\label{SRALS}
 Let $\mu$ be a homogeneous Strongly Rayleigh distribution on $\mathcal{P}([n])$ such that the marginal probabilities of all elements are at most $\epsilon$.  Let $A \in M_n(\mathbb{C})$ be a PSD contraction such that its diagonal entries are all at most $\alpha$. Then, provided that $\sqrt{\alpha} + \sqrt{\epsilon} \leq 1$, we have that, 
 \[\mathbb{P}\left[\lambda_{max}\,A(S) < 4\epsilon^{1/4}+\alpha\right]> 0.\]
\end{theorem}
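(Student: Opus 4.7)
The plan is to combine Theorem~\ref{SR} with the analytical Lieb--Sokal lemma, Theorem~\ref{ALS}. Write $p(\textbf z) := \det[\mathrm{diag}(\textbf z)-A]$ and $q := \tilde P_\mu$, both multiaffine and real stable, so that $\overline q = P_\mu$ and $\mathbb E\chi[A(S)](x) = r(x,\ldots,x)$ for $r := q(\partial)p$. Any coordinate-uniform point $(c,\ldots,c)$ above the roots of $r$ forces $\lambda_{\max}(\mathbb E\chi[A(S)]) < c$, and the interlacing statement of Theorem~\ref{SR} then lifts this to $\mathbb P[\lambda_{\max}A(S)<c]>0$. So it suffices to produce such a $c$ with $c \leq 4\epsilon^{1/4}+\alpha$.

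Next, I would apply Theorem~\ref{ALS} at symmetric basepoints $\textbf a = (a,\ldots,a)$ with $a > 1$ (above the roots of $p$, since $\lambda_{\max}(A) \leq 1$) and $\textbf b = (b,\ldots,b)$ with $b > 0$ (above the roots of $P_\mu$, whose coefficients are nonnegative). Homogeneity of $P_\mu$ gives the clean bound $\Phi^i_{P_\mu}(\textbf b) = \mathbb P[i\sim S]/b \leq \epsilon/b$. The main obstacle is controlling $\Phi^i_p(\textbf a) = ((aI-A)^{-1})_{ii}$ in a way that actually uses the diagonal hypothesis on $A$: the crude estimate $(aI-A)^{-1} \preceq \tfrac{1}{a-1}I$ discards all $\alpha$-information and would only recover the trivial $\lambda_{\max}(A(S)) \leq 1$. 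To avoid this I would expand the resolvent as a Neumann series and use the elementary PSD-contraction fact $A^k \preceq A$ for all $k \geq 1$ (so $(A^k)_{ii} \leq A_{ii} \leq \alpha$), obtaining
\[\Phi^i_p(\textbf a) \;\leq\; \frac{1}{a} + \sum_{k\geq 1}\frac{\alpha}{a^{k+1}} \;=\; \frac{a-1+\alpha}{a(a-1)}.\]

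Feeding both potential bounds into Theorem~\ref{ALS} yields the upper estimate
\[c \;\leq\; a + b - \left(\frac{a-1+\alpha}{a(a-1)} + \frac{\epsilon}{b}\right)^{-1},\]
after which the rest is a two-stage optimization. Minimizing first in $b$ (first-order condition $\epsilon/b^2 = \Phi^2$) collapses the right-hand side to $c(a) = a - (1-\sqrt\epsilon)^2 \cdot a(a-1)/(a-1+\alpha)$. Substituting $u := a-1+\alpha$ and $K := (1-\sqrt\epsilon)^2$ produces the optimum $u_\ast^2 = K\alpha(1-\alpha)/(1-K)$, at which
\[c(u_\ast) \;=\; \alpha(2K-1) + (1-K) + 2\sqrt{K(1-K)\alpha(1-\alpha)}.\]
The hypothesis $\sqrt\alpha + \sqrt\epsilon \leq 1$ is precisely equivalent to $K \geq \alpha$, which is what keeps $u_\ast \geq \alpha$ and thus $a \geq 1$ as required for the Neumann series. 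Finally, the crude estimates $\alpha(1-\alpha) \leq 1/4$, $K \leq 1$, $1-K \leq 2\sqrt\epsilon$, and $\sqrt\epsilon \leq \epsilon^{1/4}$ collapse $c(u_\ast)$ to $\alpha + (2+\sqrt 2)\epsilon^{1/4} < \alpha + 4\epsilon^{1/4}$, completing the argument.
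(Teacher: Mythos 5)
Your proposal is correct and follows essentially the same route as the paper: apply Theorem~\ref{SR} to reduce to a root bound on $\mathbb{E}\chi[A(S)]$, apply Theorem~\ref{ALS} at a pair of scalar-multiple basepoints, bound the two potentials, and then carry out exactly the same two-stage optimization (first in the $P_\mu$-variable giving the $(1-\sqrt\epsilon)^2$ factor, then in the matrix-variable via the $u=a-1+\alpha$ substitution and AM--GM), arriving at the same optimal value and the same final crude inequalities. The one place you diverge is cosmetic but worth noting: the paper obtains the resolvent potential bound $\Phi^i_p(a\mathbf{1}) \leq \tfrac{\alpha}{a-1}+\tfrac{1-\alpha}{a}$ by citing a routine computation from \cite{MRKSP} (which in effect uses convexity of $\lambda\mapsto (a-\lambda)^{-1}$ on $[0,1]$ applied to the spectral decomposition of $A$), whereas you re-derive the algebraically identical bound $\tfrac{a-1+\alpha}{a(a-1)}$ via the Neumann series together with $A^k\preceq A$ for a PSD contraction; both are correct and equally short, and you are right that $K\geq\alpha$ (i.e.\ $\sqrt\alpha+\sqrt\epsilon\leq 1$) is exactly what keeps the unconstrained minimizer $u_\ast$ in the feasible region $u>\alpha$.
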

\begin{proof}
Since $P_{\mu}$ has positive coefficients and $A$ is a contraction, we have that $a$ is above the roots of $p(Z) = P_{\mu}$ for any $a > 0$ and $b$ is above the roots of $q(Z) = \operatorname{det}[Z-A]$ for any $b > 1$. Using theorems (\ref{SR}) and (\ref{ALS}), we see that,
\[f(a, b) = a + b - \dfrac{1}{\varphi_1 (a \bold{1})+ \varphi_2 (b\bold{1})},\]
where, 
\[\varphi_1(a \bold{1}) = \operatorname{max}_{k \in [n]} \dfrac{\partial_i p}{p}(a \bold{1}) , \quad \varphi_2(b \bold{1}) = \operatorname{max}_{k \in [n]} \dfrac{\partial_i q}{q}(b \bold{1}),\]
is above the roots of $\mathbb{E} \chi[A(S)]$ for any $a > 0$ and $b > 1$. The hypothesis on the marginal probabilities of $\mu$, together with the fact that $\mu$ is homogeneous, shows that $\varphi_1(a \bold{1}) \leq \epsilon/a$. A routine calculation, see for instance \cite{MRKSP}, shows that, 
\[\varphi_2(b \bold{1})   \leq   \dfrac{\alpha}{b-1} +\dfrac{1-\alpha}{b}.\]
We now see that, $c\bold{1}$ is above the roots of $\mathbb{E} \chi[A(S)]$, where
\[c = f(a, b) := a + b - \dfrac{1}{\dfrac{\epsilon}{a} +  \dfrac{\alpha}{b-1} +\dfrac{1-\alpha}{b}},\quad \text{ for any } a > 0, b > 1,\]
The function $a \longrightarrow a - (\epsilon/a+x)^{-1}$, where $\epsilon, x > 0$, has minimum value $(1-\sqrt{\epsilon})^2/x$. We see that, 
\begin{eqnarray*}
\operatorname{min}_{\substack{a > 0,\\ b> 1}} f(a, b) &=& \operatorname{min}_{b > 1}b - \left(1-\sqrt{\epsilon}\right)^2 \dfrac{b(b-1)}{b-1+\alpha},\\
&=& \operatorname{min}_{b > 1}b -  \left(1-\sqrt{\epsilon}\right)^2  \left[(b-1+\alpha) + 1-2\alpha - \dfrac{\alpha(1-\alpha)}{b-1+\alpha}\right],\\
&=& \operatorname{min}_{y >\alpha} y (2\sqrt{\epsilon} - \epsilon) + \dfrac{\alpha(1-\alpha)}{y}\left(1-\sqrt{\epsilon}\right)^2 + 1-\alpha - (1-2\alpha)\left(1-\sqrt{\epsilon}\right)^2,  
\end{eqnarray*}
where $y = b-1+\alpha$. This last expression has minimizer,
\[\gamma:= 2\sqrt{2\sqrt{\epsilon} - \epsilon}\left(1-\sqrt{\epsilon}\right)\sqrt{\alpha(1-\alpha)} + \alpha + (1-2\alpha)(2\sqrt{\epsilon}-\epsilon) ,\]
if $\sqrt{\alpha} + \sqrt{\epsilon} \leq 1$ and with the minimizer being the trivial estimate $1$ otherwise. Noting that $\alpha(1-\alpha) \leq 1/4$, that $1-2\alpha \leq 1$ and that $\epsilon \leq 1$, we see that, 
\[\gamma \leq \sqrt{2}\epsilon^{1/4} + 2\sqrt{\epsilon} + \alpha \leq 4 \epsilon^{1/4} + \alpha.\]
\end{proof}

We now record an application. 
\begin{proposition}\label{mdpaving}
Let $A \in M_n(\mathbb{C})$ be a PSD contraction such that its diagonal entries are all at most $\alpha$. Then, there there is a $r$ paving of $A$ whose largest eigenvalue is at most $4 r^{-1/4} + \alpha$.
\end{proposition}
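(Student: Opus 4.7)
The plan is to reduce this multi-part paving statement to a single application of Theorem~\ref{SRALS} by a block-diagonal construction that packages all $r$ parts of a prospective partition into one subset-selection problem. First, I would form the block-diagonal matrix $B := \bigoplus_{k = 1}^{r} A \in M_{rn}(\mathbb{C})$, indexing its rows and columns by pairs $(k, i) \in [r] \times [n]$; it inherits from $A$ the PSD-contraction property, and each diagonal entry is still at most $\alpha$.

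Next I would construct the distribution $\nu$ on $2^{[rn]}$ that, independently for each $i \in [n]$, picks a uniformly random label $k_i \in [r]$ and returns $T := \{(k_i, i) : i \in [n]\}$. Its generating polynomial factors as
\[
P_\nu(z_{1,1}, \dots, z_{r, n}) \;=\; \prod_{i = 1}^{n} \frac{1}{r} \sum_{k = 1}^{r} z_{k, i},
\]
a product of linear forms with nonnegative coefficients, hence real stable. Thus $\nu$ is multiaffine and real stable, and therefore Strongly Rayleigh. Every set in its support has size exactly $n$, so $\nu$ is homogeneous, and by symmetry every marginal probability $\mathbb{P}_{(k,i) \sim T}[\nu]$ equals $1/r$.

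The key identification is that when $T$ encodes the partition $(S_1, \dots, S_r)$ with $S_k := \{i : k_i = k\}$, a reordering of rows and columns puts $B(T)$ in the block-diagonal form $A(S_1) \oplus \cdots \oplus A(S_r)$, so that $\lambda_{\max} B(T) = \max_{k \in [r]} \lambda_{\max} A(S_k)$. Applying Theorem~\ref{SRALS} to the pair $(B, \nu)$ with $\epsilon = 1/r$ then yields, whenever $\sqrt{\alpha} + 1/\sqrt{r} \leq 1$, a partition realising $\max_k \lambda_{\max} A(S_k) < 4 r^{-1/4} + \alpha$. In the complementary regime, an elementary check shows $4 r^{-1/4} + \alpha \geq 1$, and since $A$ is a contraction every principal submatrix satisfies $\lambda_{\max} A(S_k) \leq 1$, so any partition vacuously meets the bound.

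The only conceptually delicate step is verifying that the direct-sum sampling produces a Strongly Rayleigh, homogeneous measure with marginals $1/r$; the explicit factorization of $P_\nu$ into linear forms makes each of these properties transparent. There is thus no substantial analytical obstacle beyond this setup, since the quantitative work has already been performed inside Theorem~\ref{SRALS}; the content of the argument is simply the observation that the $r$-paving problem for $A$ is the single-subset problem for $B$.
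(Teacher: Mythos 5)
Your proof is correct and follows essentially the same route as the paper: the block-diagonal matrix $B = A \oplus \cdots \oplus A$, the product-of-linear-forms Strongly Rayleigh measure with marginals $1/r$, and the identification $B(T) \cong A(S_1) \oplus \cdots \oplus A(S_r)$ before invoking Theorem~\ref{SRALS} with $\epsilon = 1/r$. You are slightly more careful than the paper in two small respects (including the $1/r$ normalization in $P_\nu$ and treating the regime $\sqrt{\alpha} + 1/\sqrt{r} > 1$ explicitly), but these are cosmetic refinements of the same argument.
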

\begin{proof}
Let $B = \overbrace{A \oplus \cdots \oplus A}^{r}$ and let $\mu$ be the uniform measure supported on 
\[X = \{(S_1, \cdots, S_r) \mid [n]^{r}, \, S_1 \amalg \cdots \amalg S_r = [n]\}.\]
Associating the number $i$ in the $j$'th copy of $[n]$ with the variable $z_{i, j}$, we see that the generating polynomial of $\mu$ is given by, 
\[P_{\mu} = \prod_{i = 1}^{n} \left(z_{i, 1} + \cdots + z_{i, r}\right),\]
which is easily seen to be Strongly Rayleigh. The marginal probabilities are all $1/r$ and theorem (\ref{SRALS}) yields that there is a subset $S \in X$ such that, 
\[\lambda_{max}\chi[B(S)] \leq 4 r^{-1/4} + \alpha.\]
Finally, we notice that $B(S) = A(S_1) \oplus \cdots \oplus A(S_r)$, yielding an $r$ paving of $A$ with the claimed bound for its largest eigenvalue. 
\end{proof}

We would like to point out that a similar result was obtained by Nima Anari and Shayan Oveis Gharan \cite{AnaGha1}, in the context of estimating the norms of outer products of random vectors drawn from Strongly Rayeigh distributions. Their estimate is stronger than ours in certain regimes and weaker in others.

\subsection{Equal sized pavings}

With the Analytical Lieb-Sokal lemma in hand, one could use other other strongly Rayleigh measures to prove paving estimates. We prove here a result concerning pavings with size restrictions. Given an isotropic collection of vectors in $\mathbb{C}^n$, can one get a $r$ paving where all the subsets have the same cardinality? We will use the analytical Lieb-Sokal lemma to show that this is indeed the case. Note that is the same problem as finding a size $r$ paving of a hermitian matrix where all the blocks have the same size. 

\begin{comment}

One can use the main theorem of MSS in Interlacing II to show that one can get low norm pavings where the sizes of the subsets are identical(if the dimensions are correct. For $2$ paving, we will take $m$, the number of vectors to be even, for instance). Choosing the ranges of the random vectors suitably, one can, for instance show that, 
\begin{proposition}
Given a number $m = kr$ and $m$ vectors $v_1, \cdots, v_m \in \mathbb{R}^n$ such that, 
\[\sum_{ i \in [m]} v_i v_i^{*} = I,\qquad ||v_i||^2 \leq \alpha,\]
then, one can find a partition $X_1 \amalg \cdots \amalg X_r = [m]$ such that, 
\[|X_i| = \dfrac{m}{r} = k, \qquad i \in [r],\]
and also, 
\[||\sum_{j \in X_i} v_j v_j^{*} || \leq \left(\dfrac{1}{\sqrt{r}} + \sqrt{r\alpha}\right)^2.\]
\end{proposition}

Here is another way of seeing the same qualitative result. 
\end{comment}
Fix an $r \in \mathbb{N}$ and let $A \in \mathbb{C}^{rm}$ be a hermitian zero diagonal contraction. Identify the sets, $[r^2m] \cong \overbrace{[rm] \amalg \cdots \amalg [rm]}^{r}$ and associate the variable $z_{i, j}$ with $i$ in the $j$'th copy of $[rm]$ in $\overbrace{[rm] \amalg \cdots \amalg [rm]}^{r}$. Consider the uniform measure $\mu$ on  
\[\{(S_1,\cdots,  S_r) \subset \overbrace{[rm] \times \cdots \times [rm]}^r \mid S_1 \amalg \cdots \amalg S_r = [rm],\, |S_1| = \cdots = |S_r| = m\}.\]
We claim that this measure is Strongly Rayleigh. This can be seen by the fact that the generating polynomial of this measure is the Hadamard product of the uniform measures $\mu_1$ and $\mu_2$, supported on
\[\{(S_1,\cdots,  S_r) \subset \overbrace{[rm] \times \cdots \times [rm]}^r \mid S_1 \amalg \cdots \amalg S_r = [rm]\}.\]
and
\[\{(S_1,\cdots,  S_r) \subset \overbrace{[rm] \times \cdots \times [rm]}^r \mid  |S_1| = \cdots = |S_r| = m\}.\]

Both of these measures are strongly Rayleigh. This is because their generating polynomials are 
\[\prod_{i = 1}^{rm}(z_{i,1} + \cdots + z_{i,r}),\]
 and 
\[\prod_{j = 1}^{r} \left(\sum_{S \subset [rm]} \prod_{i \in S} z_{i,j}\right) = \prod_{j = 1}^{r} e_{m}(z_{1,j}, \cdots, z_{rm,j}),\] respectively. A theorem of Borcea and Branden shows that Hadamard products of multiaffine real stable polynomials are real stable. The marginal probabilities of the measure $\mu$ are all easily seen to be equal to $1/r$. Identical to the proof of proposition (\ref{mdpaving}), we have,
\begin{proposition}\label{mdpavingequal}
Let $A \in M_{rn}(\mathbb{C})$ be a PSD contraction such that its diagonal entries are all at most $\alpha$. Then, there there is a $r$ paving of $A$ whose blocks all have size equal to $n$ and whose largest eigenvalue is at most $4 r^{-1/4} + \alpha$.
\end{proposition}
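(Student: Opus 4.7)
The plan is to follow the template of the proof of Proposition \ref{mdpaving}, but feed Theorem \ref{SRALS} the uniform measure on equal-sized ordered partitions, which the paragraph immediately preceding the statement has already identified as strongly Rayleigh via the Borcea-Branden Hadamard-product theorem.

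First I would form the block-diagonal matrix $B = \overbrace{A\oplus\cdots\oplus A}^{r}\in M_{r^{2}n}(\mathbb{C})$; since $A$ is a PSD contraction with every diagonal entry at most $\alpha$, so is $B$. Identify the coordinate set of $B$ with $r$ labelled copies of $[rn]$ and attach the variable $z_{i,j}$ to the index $i$ in the $j$th copy. Let $\mu$ be the uniform measure on
\[X := \{(S_1,\ldots,S_r) : S_1\amalg\cdots\amalg S_r=[rn],\ |S_1|=\cdots=|S_r|=n\}.\]
By the discussion above the statement, $P_\mu$ is real stable, so $\mu$ is strongly Rayleigh, and $\mu$ is clearly homogeneous of total size $rn$.

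Next I would read off the marginals. By the symmetry of $\mu$ under permutations of $[rn]$ and under relabelling of the copies, the marginal $\mathbb{P}_{(i,j)\sim S}[\mu]$ equals $1/r$ for every $(i,j)$ (a short count of equal-sized ordered partitions containing a fixed $(i,j)$ confirms this). With $\epsilon = 1/r$, and assuming $\sqrt{\alpha}+r^{-1/2}\leq 1$, Theorem \ref{SRALS} applied to $B$ then produces, with positive probability, a sample $(S_1,\ldots,S_r)\in X$ satisfying
\[\lambda_{max} B(S) \;<\; 4(1/r)^{1/4}+\alpha \;=\; 4r^{-1/4}+\alpha.\]
Because $B$ is block diagonal and each $S_j$ lies inside a distinct copy of $[rn]$, the principal submatrix splits as $B(S)=A(S_1)\oplus\cdots\oplus A(S_r)$, which is the desired equal-sized $r$-paving of $A$.

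The regime $\sqrt{\alpha}+r^{-1/2}>1$ is disposed of by an elementary estimate showing $4r^{-1/4}+\alpha>1$ in that case, so that any equal-sized partition of $[rn]$ works since $A$ is a contraction. The main obstacle in writing this out carefully is really only bookkeeping: once one has verified that the Hadamard-product measure $\mu$ is strongly Rayleigh, homogeneous, and has uniform marginals $1/r$, the proof becomes line-for-line identical to that of Proposition \ref{mdpaving}.
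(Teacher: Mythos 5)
Your proposal is correct and matches the paper's intended argument: the paper explicitly sets up the Hadamard-product measure on equal-sized ordered partitions in the preceding paragraph and then simply states that the proof is identical to that of Proposition \ref{mdpaving}, which is precisely the line of reasoning you carry out. Your additional treatment of the regime $\sqrt{\alpha}+r^{-1/2}>1$ (where the hypothesis of Theorem \ref{SRALS} fails but the bound is vacuous since $A$ is a contraction) is a small but welcome clarification that the paper leaves implicit.
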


\subsection{Mixed discriminants}
Marcus, Spielman and Srivastava \cite{MSS2}, settled the Kadison-Singer problem by solving Weaver's $KS_r$ conjecture and in their proof proved bounds on the largest root of \emph{mixed characteristic polynomials}, that are closely related to \emph{mixed discriminants}.  Recall that given PSD matrices $A_1, \cdots, A_m \in M_n(\mathbb{C})$, such that $A_1 + \cdots + A_m = I_n$, the mixed characteristic polynomial $\mu[A_1, \cdots, A_m]$ is defined as $p(x\textbf{1}_m)$ where, 
\[p(z_1, \cdots, z_m) :=\left[ \prod_{i = 1}^{m} \left(1-\dfrac{\partial}{\partial z_i}\right)\right] \operatorname{det}\left[ z_1 A_1 + \cdots + z_m A_m\right].\]

MSS proved the followıng fundamental estimate on the roots of mixed characteristic polynomials. 
\begin{theorem}[MSS]
Given PSD matrices $A_1, \cdots, A_m \in M_n(\mathbb{C})$, such that $A_1 + \cdots + A_m = I_n$ and with $\operatorname{Trace}[A_k] \leq \epsilon$ for $k \in [m]$ and letting $p$ be as above, one has that $(1+\sqrt{\epsilon})^2\textbf{1}$ is above the roots of $p$. 
\end{theorem}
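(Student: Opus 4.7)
The plan is to express $p$ as $q(\partial)P$ with $q(z_1,\ldots,z_m) = \prod_{i=1}^m(1-z_i)$ (multiaffine) and $P(z_1,\ldots,z_m) = \det[\sum_i z_i A_i]$ (real stable), and then apply Theorem~\ref{ALS2}. The hypotheses are immediate: since $\sum A_i = I$ with $A_i \succeq 0$, the symmetric point $\mathbf{a}=a\mathbf{1}$ lies above the roots of $P$ for every $a>0$, because $P(\mathbf{a}+\mathbf{t}) = \det[aI+\sum_i t_i A_i] > 0$ on $\mathbf{t} \geq \mathbf{0}$, while $\mathbf{b}=b\mathbf{1}$ lies above the roots of $\bar{q} = \prod_i(z_i-1)$ for every $b>1$.

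The key step is the computation of the polarized potentials at these points. Letting $k = \max_i\operatorname{rank}(A_i)$ be the polarization degree, a direct expansion of the canonical polarization in elementary symmetric polynomials gives
\[\frac{\partial_{z_{i,1}}\operatorname{Pol}(P)}{\operatorname{Pol}(P)}(\overline{\mathbf{a}}) \;=\; \frac{1}{k}\cdot\frac{\partial_i P}{P}(a\mathbf{1}) \;=\; \frac{\operatorname{Tr}(A_i)}{ka} \;\leq\; \frac{\epsilon}{ka},\]
while the matching potential of $\tilde{\overline{q}}$ at $\overline{\mathbf{b}}$ comes out to $1/(kb-1)$. Theorem~\ref{ALS2} then says that $c\mathbf{1}$ lies above the roots of $p$ whenever
\[c \;\geq\; a + b - \frac{1}{\epsilon/(ka) + 1/(kb-1)}.\]
Substituting $u = ka$, $v = kb-1$ and making the natural balance $u = \sqrt{\epsilon}\,v$ between the two halves of the potential reduces the optimization of $c$ to a one-parameter minimization in $v$.

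The main technical obstacle is to extract the bound $(1+\sqrt{\epsilon})^2$ \emph{uniformly} in the polarization degree $k$, because $k$ can be as large as the matrix dimension $n$ (for instance when $A_i = \tfrac{\epsilon}{n}I$) while MSS's estimate is independent of $n$. The one-shot polarized application of the lemma gives a bound that degrades in $k$, and the resolution is to apply the analytical Lieb-Sokal lemma \emph{iteratively}: process the $m$ factors $(1-\partial_i)$ one at a time, shifting the evaluation point by $-\delta e_i$ at each step and choosing $\delta$ so that the barrier $\Phi_i^{P}$ does not grow. Starting at $t\mathbf{1}$ with $t$ slightly above $(1+\sqrt{\epsilon})^2$ and with the shift $\delta$ given by the MSS-type barrier balance, each iteration keeps the barriers bounded by $1$ so that the subsequent factor can be applied, and after $m$ steps one lands at $(1+\sqrt{\epsilon})^2\mathbf{1}$ above the roots of $p$. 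This iteration reproduces MSS's original barrier computation within the Analytical Lieb-Sokal framework.
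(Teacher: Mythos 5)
The paper does not prove this theorem; it is stated (with the attribution ``MSS'') as a known result quoted from \cite{MSS2}, so that the estimate produced by the Analytical Lieb--Sokal lemma can be \emph{compared} against it. The surrounding text and the concluding remarks of the paper make explicit that Theorem~\ref{ALS2} yields only a weaker $\bigl(1+O(\epsilon^{1/4})\bigr)^2$-type estimate, so no argument that passes through Theorem~\ref{ALS2} alone can be expected to recover $(1+\sqrt{\epsilon})^2$.

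Your one-shot computation, once corrected, makes this manifest. The $q$-potential must be taken on $\overline{\tilde q}$, the flip of the polarized symbol --- as the paper's own worked example in this section does --- not on $\tilde{\overline q}$. For $\overline{\tilde q}=\prod_i\bigl(\prod_j z_{i,j}-\sum_l\prod_{j\neq l}z_{i,j}\bigr)$ one needs $b>k$ for $b\mathbf{1}$ to lie above the roots, and the potential there is $\tfrac{b-k+1}{b(b-k)}$, not $\tfrac{1}{kb-1}$. Carrying out the two-variable optimization with the corrected potential gives roughly $\bigl(1+\sqrt{2}\,(k\epsilon)^{1/4}\bigr)^2$, which degrades with $k=\max_i\operatorname{rank}(A_i)\le n$ and is strictly worse than MSS's dimension-free $(1+\sqrt{\epsilon})^2$; this is exactly the ``major drawback'' the paper acknowledges in its final section. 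You correctly diagnose the obstruction, but the proposed resolution --- ``process the $m$ factors $(1-\partial_i)$ one at a time, shifting by $-\delta e_i$ and choosing $\delta$ so the barrier does not grow'' --- is no longer an application of the Analytical Lieb--Sokal lemma at all: that is verbatim the multivariate barrier argument of \cite{MSS2}, whose shift and monotonicity lemmas are not the same as the shift $1/\varphi_i$ furnished by Theorem~\ref{ALS}, and you do not actually carry the iteration out or verify that any ALS-type shift keeps the barriers bounded step after step. As written the final paragraph is a sketch of MSS's original proof in this paper's notation, not a derivation from Theorem~\ref{ALS2}. If the goal is a self-contained proof of the stated estimate, the honest route is to reproduce the barrier argument of \cite{MSS2} directly.
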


The mixed characteristic polynomials that arise when working with the Kadison-Singer problem have an additional special structure. For this class, the root estimates of MSS take the following form.
\begin{theorem}[MSS]
Let $A_1, \cdots, A_m$ be rank $1$ $ n\times n$ PSD matrices such that, 
\[A_1 + \cdots + A_m = I_n, \quad \operatorname{Trace}(A_k) \leq \epsilon, \quad k \in [m],\]
and let $r \in \mathbb{N}$. Then, letting, 
\[p(z_1, \cdots, z_m) :=\left[ \prod_{i = 1}^{m} \left(1-\dfrac{\partial}{\partial z_i}\right)\right] \operatorname{det}\left[ z_1 A_1 + \cdots + z_m A_m\right]^r \mid_{z_1 = \cdots = z_m = x},\]
we have that $(1+\sqrt{r\epsilon})^2 \textbf{1}$ is above the roots of $p$.
\end{theorem}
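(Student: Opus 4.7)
The approach I would take is a direct amplification argument reducing this theorem to the general PSD case of the MSS bound stated immediately above (the one with constant $(1+\sqrt{\epsilon})^2$). In fact, the rank-$1$ hypothesis will play no role in the argument; it just happens to be the case of interest for Kadison--Singer applications.

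The first step is to pass to the block-diagonal matrices $\tilde{A}_i := \underbrace{A_i \oplus \cdots \oplus A_i}_{r\text{ copies}} \in M_{nr}(\mathbb{C})$ for each $i \in [m]$. These are PSD, and one checks that $\sum_i \tilde{A}_i = \bigoplus_{k=1}^r \sum_i A_i = \bigoplus_{k=1}^r I_n = I_{nr}$ and $\operatorname{Trace}(\tilde{A}_i) = r\operatorname{Trace}(A_i) \leq r\epsilon$. Thus the hypotheses of the previous MSS theorem hold for the family $\{\tilde{A}_i\}$ on the $nr$-dimensional space, with trace parameter $r\epsilon$ in place of $\epsilon$.

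The second step uses the multiplicativity of the determinant over block direct sums:
\[
\operatorname{det}\Bigl[\sum_i z_i \tilde{A}_i\Bigr] \;=\; \operatorname{det}\Bigl[\bigoplus_{k=1}^r \sum_i z_i A_i\Bigr] \;=\; \prod_{k=1}^r \operatorname{det}\Bigl[\sum_i z_i A_i\Bigr] \;=\; \operatorname{det}\Bigl[\sum_i z_i A_i\Bigr]^r.
\]
Consequently, the multivariate polynomial $\tilde{p}(z_1,\ldots,z_m) := \prod_i (1 - \partial_i) \operatorname{det}[\sum_i z_i \tilde{A}_i]$ to which the previous MSS theorem applies is exactly the polynomial appearing in our theorem before the substitution $z_1 = \cdots = z_m = x$. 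Applying that theorem yields that $(1+\sqrt{r\epsilon})^2 \mathbf{1}$ lies above the roots of $\tilde{p}$ in $\mathbb{R}^m$. To descend to the univariate statement, note that if $\tilde{p}(c\mathbf{1} + \mathbf{t}) \neq 0$ for every $\mathbf{t} \in \mathbb{R}_+^m$, then specializing to $\mathbf{t} = t\mathbf{1}$ shows that the univariate polynomial $x \mapsto \tilde{p}(x,\ldots,x)$ has no root strictly above $c = (1+\sqrt{r\epsilon})^2$.

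The proof is thus purely an amplification, and the only things to check carefully are the determinantal identity and trace computation above, both of which are routine. There is no substantial obstacle; notably, the Analytical Lieb--Sokal lemma developed in this paper is not needed for this particular theorem, which follows directly from the MSS bound applied on the $nr$-dimensional direct sum space.
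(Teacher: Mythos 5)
Your proof is correct, and it is the standard amplification argument by which this statement follows from the general PSD bound. The key checks all go through: $\sum_i \tilde{A}_i = I_{nr}$, $\operatorname{Trace}(\tilde{A}_i) = r\operatorname{Trace}(A_i) \le r\epsilon$, and $\det\bigl[\sum_i z_i\tilde{A}_i\bigr] = \det\bigl[\sum_i z_iA_i\bigr]^r$ by block multiplicativity, so the previous theorem applied on $\mathbb{C}^{nr}$ with trace parameter $r\epsilon$ yields that $(1+\sqrt{r\epsilon})^2\mathbf{1}$ lies above the roots of the multivariate polynomial, and restriction to the diagonal preserves ``above the roots.'' You are also right that the rank-one hypothesis is not used.

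One caveat worth flagging: the paper does not actually prove this theorem --- it is stated as a citation to Marcus--Spielman--Srivastava and then used. So your proof is not being compared against an in-paper argument; rather, you have reconstructed (correctly) the direct-sum reduction that MSS themselves use to pass from the general mixed characteristic polynomial bound to the $r$-fold version appearing in Weaver's $\mathrm{KS}_r$. The one presentational wrinkle is that the theorem's display already builds the specialization $z_1 = \cdots = z_m = x$ into the definition of $p$, so $p$ is univariate and the conclusion should be read as ``the largest root of $p$ is at most $(1+\sqrt{r\epsilon})^2$''; your final specialization step $\mathbf{t} = t\mathbf{1}$ handles exactly this, so no gap, just a mismatch in the statement's notation rather than in your argument.
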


This implies that for $\epsilon < \left(1-1/\sqrt{r}\right)^2$, we have that $r\textbf{1}$ is above the roots of $p$. This is precisely what is needed to complete the proof of Kadison-Singer.

We now show how the Analytical Lieb-Sokal lemma can also be used to get non-trivial estimates. We will use theorem (\ref{ALS2}) for this. We note that the polarization of the polynomial $p(z) = \operatorname{det}\left[ z_1 A_1 + \cdots + z_m A_m\right]^r$ is given by, 
\[Pol(p) = \prod_{j = 1}^{r} \operatorname{det}\left[ z_{1,j} A_1 + \cdots + z_{m,j} A_m\right].\]
We let, 
\[q(Z) = \prod_{i = 1}^{m} \left(1-z_i \right) .\]
In the notation of theorem (\ref{ALS2}), we have that,
\[\tilde{q} =  \prod_{i = 1}^{m} \left(1- \sum_{j = 1}^{r} z_{i,j} \right).\]
The \emph{flip} $\overline{\tilde{q}}$ of $\tilde{q}$ is easily calculated. One has, 
\[\overline{\tilde{q}} = \prod_{i = 1}^{m} \left(\prod_{j = 1}^{r} z_{i,j} - \sum_{k = 1}^{r} \prod_{ j \neq k}z_{i, j} \right)  .\]
One sees that $a$ is above the roots of $p$ for $a > 0$ and that $b$ is above the roots of $\overline{\tilde{q}}$ for $b > r$. One further, calculates that,
\[\dfrac{\partial_{i,j}Pol( p)}{Pol(p)}(a\bold{1}) \leq \dfrac{\epsilon}{a}, \qquad \dfrac{\partial_{i,j} \overline{q}}{q}(b\bold{1}) = \dfrac{b^{r-2}(b-r+1)}{b^{r-1}(b - r)} = \dfrac{b-r+1}{b(b - r)}.\]
 Here, we assume that $\operatorname{Trace}(A_i) \leq \epsilon$ for all $i \in [m]$. Theorem (\ref{ALS2}) now shows that, $c\bold{1}$ is above the roots of $q(\partial)p$ where, 
\[ c = f(a, b) := a + b - \dfrac{1}{\dfrac{\epsilon}{a} + \dfrac{b-r+1}{b(b-r)}}.\]
Optimizing over $a$, we see that, 
\[\operatorname{min}_{a > 0}f(a, b) = b-(1-\sqrt{\epsilon})^2 \dfrac{b(b-r)}{b-r+1}.\]
Further optimizing over $b$, we see that if $\epsilon \leq (1 - \sqrt{1/r})^2$, then $c\bold{1}$ is above the roots of $q(\partial p)$, where, 
\[c = \left(1+\sqrt{2r}\epsilon^{1/4}\right)^2.\]

This must be compared to the result of MSS \cite{MSS2}, where they get the asymptotically stronger estimate, $(1+\sqrt{r\epsilon})^2$. 
%We may  write, 
%\[p_r(z_1, \cdots, z_m) =  \left[\prod_{i = 1}^{m} \left(1-\sum_{j = 1}^{r} \dfrac{\partial}{\partial z_{i,j}} - \dfrac{\partial}{\partial y_i} \right) \right]\operatorname{det}\left[ z_1 A_1 + \cdots + z_m A_m\right]\operatorname{det}\left[ y_1 A_1 + \cdots + y_m A_m\right] \mid_{z_1 = \cdots = z_m = y_1 = \cdots = y_m = x}.\]
\section{Concluding remarks}

A major drawback of the main theorem (\ref{ALS}) is the suboptimal estimates ($O(r^{-1/4})$ in place of the optimal $O(r^{-1/2})$) when one applies it to natural determinantal polynomials. It would be intersting to see if working with other potential/barrier functions can fix this problem. It is also interesting to see if one can get (one sided) pavings into equal sized, as in proposition (\ref{mdpavingequal}) blocks of size $O(1/\epsilon^2)$ using other, more adhoc, techniques.

%\bibliographystyle{amsplain}
%\bibliography{../../BibFiles/biblio1}
	
	   \end{document}